\algnewcommand\Input{\item[\textbf{Input:}]}%
\algnewcommand\Output{\item[\textbf{Output:}]}%
\algnewcommand\Init{\item[\textbf{Init:}]}%
\newtheorem{myTheorem}{Theorem}
\newtheorem{myProposition}{Proposition}
\newtheorem{myLemma}{Lemma}
\newtheorem{myRemark}{Remark}
\newtheorem{myDefinition}{Definition}
\newtheorem{myExample}{Example}
\newcommand{\setOfReals}{\mathbb{R}}
\newcommand{\setOfNaturals}{\mathbb{N}}
\newcommand{\setOfNonnegativeIntegers}{\mathbb{N}_0}
\newcommand{\setOfPositiveReals}{\setOfReals_{+}}
\newcommand{\setN}[1]{ [ #1]}
\newcommand{\setTime}[1]{ \mathcal{T}_{#1}}
\newcommand{\cardinality}[1]{\mid #1  \mid}
\newcommand{\coset}[2]{\langle #1 \rangle_{#2}}
\newcommand{\KL}[2]{D_\mathrm{KL}\left( #1 \mid\mid #2  \right) }
\newcommand{\indicator}[1]{\mathbbm{1}(#1)}
\newcommand{\diophantine}[2]{\Lambda (#1,#2)}
\newcommand{\SymmetryGroup}[1]{\mathsf{Sym}\left( #1 \right)}
\newcommand{\Aut}[1]{\mathsf{Aut}\left( #1 \right)}
\newcommand{\aggregation}[1]{\mathsf{agg}\left( #1  \right)}
\newcommand{\uniformization}[1]{\mathsf{unif}\left( #1  \right)}
\newcommand{\fibre}[1]{\mathsf{fibre}\left( #1 \right)}
\newcommand{\prob}{\mathsf{P}}
\newcommand{\probOf}[1]{\prob(#1)}
\newcommand{\eqstop}{.}
\newcommand{\eqcomma}{,}
\newcommand{\defeq}{\coloneqq}
\newcommand{\ie}{\textit{i.e.}}
\newcommand{\eg}{\textit{e.g.}}
\newcommand{\proofLocation}[2]{\ifthenelse{\boolean{longVersion}}{\text{#1}}{\text{#2}}}
\newcommand{\tdWasiur}[2][]{\ifthenelse{\boolean{draftversion}}{\todo[inline, color=blue!20, caption={2do}, #1]{\begin{minipage}{\textwidth-4pt}\emph{Remark Wasiur:}\\#2\end{minipage}}}{}}
\newcommand{\tdArnab}[2][]{\ifthenelse{\boolean{draftversion}}{\todo[inline, color=orange!20, caption={2do}, #1]{\begin{minipage}{\textwidth-4pt}\emph{Remark Arnab:}\\#2\end{minipage}}}{}}
\newcommand{\tdGeneral}[2][]{\ifthenelse{\boolean{draftversion}}{\todo[inline, color=green!20, caption={2do}, #1]{\begin{minipage}{\textwidth-4pt}\emph{ToDO:}\\#2\end{minipage}}}{}}
\tikzstyle{server}=[circle, line width=0.5pt, rounded corners=0.1mm, draw=black!100, fill=tud3a!100]
\tikzstyle{vertex}=[circle, line width=0.5pt, draw=black!100, fill=tud0a!50]
\tikzstyle{dispatcher} =[and gate US, line width=0.5pt, draw=black!100, fill=tud1a!100]
\tikzstyle{dotbox} = [draw=white, fill=white, rectangle,  inner sep=10pt, inner ysep=20pt]
\tikzset{three_sided/.style={
		draw=none,rectangle, 
		append after command={
			[shorten <= -0.5\pgflinewidth]
			([shift={(-1.5\pgflinewidth,-0.5\pgflinewidth)}]\tikzlastnode.north west)
			edge([shift={( 0.5\pgflinewidth,-0.5\pgflinewidth)}]\tikzlastnode.north east)
			([shift={( 0.5\pgflinewidth,-0.5\pgflinewidth)}]\tikzlastnode.north east)
			edge([shift={( 0.5\pgflinewidth,+0.5\pgflinewidth)}]\tikzlastnode.south east)
			([shift={( 0.5\pgflinewidth,+0.5\pgflinewidth)}]\tikzlastnode.south east)
			edge([shift={(-1.0\pgflinewidth,+0.5\pgflinewidth)}]\tikzlastnode.south west)
		}
	}
}
\title{Approximate lumpability for Markovian agent-based models using local  symmetries}
\author{Wasiur~R.~KhudaBukhsh\footnote{Department of Electrical Engineering and Information Technology,
		Technische Universit\"{a}t Darmstadt, Germany,
		email:\href{mailto:wasiur.khudabukhsh@bcs.tu-darmstadt.de}{wasiur.khudabukhsh@bcs.tu-darmstadt.de}  }      \hspace{1.5mm}\href{https://orcid.org/0000-0003-1803-0470}{\includegraphics[width=3mm]{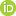}}, \emph{\small Technische Universit\"{a}t Darmstadt} \hfill 	 \\
	Arnab~Auddy\footnote{Indian Statistical Institute, 203 Barrackpore Trunk Road,
		Kolkata 700108,
		India, email:\href{mailto:arnabaudi@gmail.come}{arnabaudi@gmail.com} }, \emph{\small Indian Statistical Institute} \hfill \\
	Yann~Disser\footnote{Department of Mathematics,
		Technische Universit\"at Darmstadt,
		Dolivostrasse 15,
		64293 Darmstadt,
		Germany,
		email:\href{mailto:disser@mathematik.tu-darmstadt.de}{disser@mathematik.tu-darmstadt.de}  }     \hspace{1.5mm}\href{https://orcid.org/0000-0002-2085-0454}{\includegraphics[width=3mm]{Figures/ORCID-iD_icon-16x16}}, \emph{\small Technische Universit\"{a}t Darmstadt}  \hfill \\
	Heinz Koeppl\footnote{Department of Electrical Engineering and Information Technology,
		Technische Universit\"{a}t Darmstadt, Germany,
		email:\href{mailto:heinz.koeppl@bcs.tu-darmstadt.de}{heinz.koeppl@bcs.tu-darmstadt.de} }, \emph{\small Technische Universit\"{a}t Darmstadt}  \hfill
}
\date{}
\begin{document}
\maketitle


\begin{abstract}
We study 
a Markovian agent-based model (MABM) in this paper. Each agent is endowed with a local state that changes over time as the agent interacts with its neighbours. The neighbourhood structure is given by a graph. In a recent paper \cite{Simon2011Exact}, the authors used the automorphisms of the underlying graph to generate a lumpable partition of the joint state space ensuring Markovianness of the lumped process for binary dynamics. However, many large random graphs tend to become asymmetric rendering the automorphism-based lumping approach ineffective as a tool of model reduction. In order to mitigate this problem, we propose a lumping method based on a notion of local symmetry, which compares only local neighbourhoods of vertices. Since local symmetry only ensures approximate lumpability, we quantify the approximation error by means of Kullback-Leibler divergence rate between the original Markov chain and a \emph{lifted} Markov chain. We prove the approximation error decreases monotonically. The  connections to fibrations of graphs are also discussed.

\end{abstract}

\setcounter{equation}{0}
\section{Introduction}
\label{lump:sec:introduction}
 In this paper, a Markovian agent-based model (MABM)  refers to  a stochastic interacting particle system (IPS) with a \emph{finite} local state space. Given a  graph with $N$ vertices, we endow each vertex with a local state that varies  over time stochastically as the vertex interacts with its neighbours. Let $G=(V,E)$ be a graph (possibly a realisation of a random graph), where $V\defeq \{1,2, \ldots, N\}$ is the set of vertices, and $E \subseteq V \times V $ is the set of edges. For simplicity, we assume $G$ is undirected in the sense that $(u,v) \in E$ whenever $(v,u)\in E$, for $u,v \in V$.  
 Let $X_i(t)$ denote the local state of vertex~$i \in V$ at time~$t \in \setTime{} \defeq [0,T] $ for some $T >0$. For simplicity, we assume the vertices have the same finite  local state space $\mathcal{X} \defeq \{1,2,\ldots, K\}$ for some positive integer~$K$. 
 We assume the process~$X \defeq (X_1, X_2,\ldots, X_N ) \in \mathcal{X}^N$ is a continuous time Markov chain (CTMC), whose transition intensities depend on $G$. The objective of this paper is to devise an approximately lumpable partition of $\mathcal{X}^N$ \cite{kemeny1960finite,buchholz_1994,Deng2011KL,RUBINO1993WL_CTMC,rubino_1989}  using local symmetries of the graph $G$.


In a recent paper \cite{Simon2011Exact}, the authors introduced a novel lumping procedure based on the automorphisms of the underlying graph $G$. They considered a stochastic susceptible-infected-susceptible (SIS) epidemic process on a graph. They showed that, when the automorphism group is known, a lumpable partition can be obtained by  determining  the orbits of the elements of the state space with respect to the automorphism group.  The idea of lumping using graph automorphisms is innovative. However, it is   not always  efficient for two reasons. First,  finding all automorphisms without additional information about the graph structure is computationally prohibitive, especially for large graphs (see \cite{Babai2015Quasipolynomial}).  Second, there may be too few automorphisms to engender significant state space reduction \cite{Simon2011Exact} as many large random graphs tend to be asymmetric with high probability (see \cite{luczak1988automorphism,Kim2002RRG,McKay1984Auto}). Therefore, we propose a lumping procedure based on  a \emph{local}
notion of symmetry  \cite{Barbosa2016LocalSymmetry} taking  into account only local ($k$-hop) neighbourhoods of each vertex. In our approach, we construct  an equitable partition \cite[Chapter 9]{godsil2013algebraic} of $V$ by clubbing together vertices that are locally symmetric. We say two vertices $u$ and $v$ are locally symmetric if there exists an isomorphism $f$ between their respective local neighbourhoods (the induced subgraphs) such that $f(u)=v$. This is less restrictive than demanding the existence of an automorphism $g$ on the entire graph  $G$ mapping $u$ to $v$.

Local symmetry-driven lumping  allows for a more profitable aggregation than automorphism. Therefore, even when there are too few automorphisms, we can still achieve significant state space reduction by means of local symmetry-driven lumping. The price we pay for this gain is that the resultant lumped  process will only be \emph{approximately} Markovian. We
quantify the approximation error in terms of the Kullback-Leibler (KL) divergence  rate between 
the uniformization of the original process $X$ and a  Markov chain \emph{lifted}  from the lumped one (details provided in \autoref{lump:sec:localSymmetry}). 
 For a particular type of lifting called $\pi$-lifting, we prove that the approximation error decreases  as we increase the number of hops in our consideration  of local symmetries. 

Interestingly, the equivalent classes of the local symmetry can be shown to be  the same as the fibres of a graph fibration \cite{boldi2002fibrations}. Therefore, the fibres can also be used to aggregate the states of $\mathcal{X}^N$ to achieve approximate lumpability in the same fashion as we do with local symmetry. In addition to that, the problem of finding a lumpable partition for our MABM shares interesting connections with other related concepts in algebraic graph theory, such as colour refinement for directed graphs \cite{BerkholzBonsmaGrohe13,ArvindKoblerRattanVerbitsky16} and coverings \cite{Angluin80}. A discussion of these connections   paves the way for potential application of graph theoretic algorithms to problems in applied probability, and vice versa.

The paper is structured as follows. In \autoref{lump:sec:prelims}, we discuss some mathematical preliminaries required for the rest of the paper. We formally introduce the MABM in \autoref{lump:sec:MABM}. The lumping  based on graph automorphisms and related results are presented in \autoref{lump:sec:automorphismLumpCPRG}. In \autoref{lump:sec:localSymmetry}, we extend the lumping ideas to local symmetry of graphs. Connections to graph fibrations are explored in \autoref{lump:sec:fibrations}. The approximation error associated with local symmetry-driven lumping is studied in \autoref{lump:sec:approximation_error}.  Our theoretical discussions are also complemented with    some numerical results  on Erd\"os-R\'enyi, Barab\'asi-Albert preferential attachment, and Watts-Strogatz small world graphs.  
Finally, we conclude the paper with a short discussion in \autoref{lump:sec:discussion}.

\setcounter{equation}{0}
\section{Mathematical Preliminaries}
\label{lump:sec:prelims}

\paragraph{Notational conventions}
We use  $\setOfNaturals$ and $\setOfReals$ to denote the set of natural numbers and the set of real numbers. Also, we define $\setOfNonnegativeIntegers \defeq  \setOfNaturals \cup \{0\}$ and $\setOfPositiveReals \defeq  \setOfReals \setminus (-\infty,0]$. Additionally, we denote the set $\{1,2,\ldots, N\} $ by  $\setN{N}$. For a set $A$, we denote  its cardinality by $\cardinality{A}$, and    the class of all subsets of $A$, by $2^A$.  Given $N, K \in \setOfNaturals$, the set of all non-negative integer solutions to the Diophantine equation $x_1+x_2+\ldots+x_K=N$ by $\diophantine{N}{K}$, \ie, $\diophantine{N}{K} \defeq \{ x = (x_1,x_2,\ldots,x_K) \in \setOfNonnegativeIntegers^K \mid  x_1+x_2+\ldots+x_K=N \} $. We use $\indicator{.}$ to denote the indicator function. The symmetric group on a set $A$ is denoted by $\SymmetryGroup{A}$. 


\subsection{Lumpability}
We first define  lumpability for a discrete time Markov chain (DTMC) for ease of understanding. 
Standard references on this topic are \cite{kemeny1960finite,rubino_1989,RUBINO1993WL_CTMC,buchholz_1994}.

Let $\{Y(t)\}_{t \in \setOfNaturals}$ be a DTMC on a state space $\mathcal{Y}=  \setN{K}$ with transition probability matrix~$T = ((t_{i,j}))_{K\times K}$, where $t_{i,j} \defeq \probOf{ Y(2) =j \mid Y(1)=i }$.  Given a partition $\{ \mathcal{Y}_1, \mathcal{Y}_2,\ldots, \mathcal{Y}_M  \}$ of $\mathcal{Y}$, we define a process $\{Z(t) \}_{t \in \setOfNaturals}$ on $\setN{M}$ as follows: $Z(t) = i \in \setN{M}  \iff Y(t) \in \mathcal{Y}_i $, for each $t \in \setOfNaturals$. The process $Z$ is called the \emph{lumped} or the \emph{aggregated} process. The sets $\mathcal{Y}_i$'s are  often called lumping classes.

\begin{myDefinition}[Lumpability of a DTMC]
A DTMC $Y$   on a state space $\mathcal{Y}$ is  lumpable with respect to the partition $\{  \mathcal{Y}_1, \mathcal{Y}_2,\ldots, \mathcal{Y}_M \}$ of $\mathcal{Y}$, if the lumped process $Z$  is itself a DTMC for every choice of the initial distribution of $Y$ \cite[Chapter VI, p. 124]{kemeny1960finite}.
\end{myDefinition}

A necessary and sufficient condition for lumpability,  known as the Dynkin's criterion in the literature,  is the following: for any two pairs of lumping classes $\mathcal{Y}_i$ and $\mathcal{Y}_j$ with $i \neq j$, the transition probabilities  of moving into $\mathcal{Y}_j$  from any two states in $\mathcal{Y}_i$ are the same, \ie, $t_{u,  \mathcal{Y}_j } = t_{v,  \mathcal{Y}_j }$ for all $u, v \in \mathcal{Y}_i$, where we have used the shorthand notation $t_{u,  A} = \sum_{j  \in A } t_{u,j}$ for $A \subseteq \mathcal{Y}$. The common values, \ie, $\tilde{t}_{i,j} =   t_{u, \mathcal{Y}_{j} }$, for some $u \in \mathcal{Y}_i$, and $ i, j \in \setN{M}$,  form the transition probabilties of the lumped process $Z$.   Let  $\tilde{T} = ((  \tilde{t}_{i,j}))_{M\times M}$.  Since the Dynkin's criterion is both necessary and sufficient, some authors alternatively define lumpability in terms of Dynkin's criterion.   In the literature, the process $Z$ is sometimes denoted as $Z =\aggregation{Y}$.  

Now, we move to the continuous time case. The lumpability of a CTMC can be equivalently described in terms of lumpability of a linear system of ODEs. Consider the linear system $  \dot{y} =  y A  $,
where $A = ((a_{i,j}))$ is an $K\times K$ matrix (representing the transition rate 
matrix  of the corresponding continuous time Markov process). 

\begin{myDefinition}[Lumpability of a linear system]
  The linear system $  \dot{y} = y A  $ is said to be lumpable with respect to a partition $\{  \mathcal{Y}_1, \mathcal{Y}_2,\ldots, \mathcal{Y}_M \}$ of $\mathcal{Y}$, if there exists an $M\times K$ matrix $B = ((b_{i,j}))$ satisfying the Dynkin's criterion, \ie, if $b_{i,j} = \sum_{l \in \mathcal{Y}_j } a_{u,l}  =  \sum_{l  \in \mathcal{Y}_j } a_{v,l} $ for all $u, v \in \mathcal{Y}_i$.
\end{myDefinition}

%


An alternative approach to study lumpability of a CTMC is via its  uniformization. This approach will be particularly useful when we discuss  lumpability using local symmetries later. Let us consider a CTMC $\{Y(t)\}_{t \in \setTime{}}$ with transition rate matrix $A= ((a_{i,j}))$. It is known that the original CTMC is lumpable with respect to a given partition if and only if the uniformized DTMC is lumpable with respect to the same partition.   The uniformized DTMC $\tilde{Y}$ is often denoted by $\uniformization{Y}$, \ie, $  \tilde{Y} =\uniformization{Y}  $.    It was proved in \cite{RUBINO1993WL_CTMC,Ganguly2014CRN} that 
\begin{align}
  \label{lump:eq:unif_agg_equality}
\aggregation{\uniformization{Y}   } =\uniformization{\aggregation{Y}  }  \eqstop
\end{align}

Another useful observation that will be helpful later  is regarding  permutation of the states. It is intuitive that permutation  of elements of the state space does not destroy lumpability of a process. The proof of the following remark is straightforward, but is provided in \autoref{lump:sec:appendixA}   for the sake of completeness.

\begin{myRemark}
  \label{lump:proposition:permutation}
Let $Y$ be a CTMC on $\mathcal{Y}$ with transition rate matrix $A =((a_{i,j})) $. Let $f \in  \SymmetryGroup{ \mathcal{Y} }  $ be used to permute the states. If     $Y$ (or the linear system $ \dot{y}=y A  $) is lumpable with respect to a partition $\{ \mathcal{Y}_1, \mathcal{Y}_2,\ldots, \mathcal{Y}_M\}$, then the  process $Z= f(Y) $  is lumpable with respect to the  partition $\{ \mathcal{\tilde{Y}}_1, \mathcal{\tilde{Y}}_2,\ldots, \mathcal{\tilde{Y}}_M   \} $, where $  \mathcal{\tilde{Y}}_i = \{  f(u) \mid u \in  \mathcal{Y}_i  \}  $. 
\end{myRemark}

\setcounter{equation}{0}
\section{Markovian Agent-based Model}
\label{lump:sec:MABM}
%

\subsection{Interaction rules and the transition intensities}
\label{lump:subsec:intensities}
The most important ingredient of an MABM  are the interaction rules of the agent-based local processes $X_i$'s. These rules of interaction determine the dynamics of the process. Note that an MABM can also be viewed as a collection of local CTMCs that are connected to each other via the graph $G$.  In other words, each $X_i$ can be seen as a local CTMC, conditioned on the rest. In this work, we assume the intensities of the local CTMC $X_i$ depend on the  local states  $X_j$'s  of the neighbours of the vertex~$i \in V$ (such that $(i,j) \in E $). Let $d_i = \cardinality{\{ j \in V \mid  (i,j)\in E  \} }$ denote the number of neighbours of vertex~$i$. Additionally, we assume the intensities depend only on the counts of neighbours for each local state $a \in \mathcal{X}$. 
Therefore, we define the following summary function $c$ that returns population counts for different configurations of local states:
\begin{align*}
   c :  \{ \emptyset  \} \cup \left( \bigcup\limits_{l=1  }^{N}  \mathcal{X}^l  \right)  \longrightarrow   \{ \emptyset \} \cup \left(   \bigcup\limits_{l=1  }^{N}  \diophantine{l}{K}   \right)
\end{align*}
such that, for $x = (x_1,x_2,\ldots, x_l)  \in   \mathcal{X}^l$,  and  $l \in \setN{N}$,
\begin{align}
c( x) = (y_1,y_2,\ldots, y_K) \in \diophantine{l}{K} \text{ where } y_i =\quad \cardinality{ \{ x_j = i \in \mathcal{X}  : j =1,2, \ldots, l \}  }  \eqcomma
\end{align}
and set $c(\emptyset) = \emptyset$. 
The empty set $\emptyset$ is used to denote the neighbourhood of an isolated vertex. 
An important feature of the set-valued function $c$ is that it is permutation invariant in the sense  that $c(x)=c(x')$ if the elements of $x'$ are permutations of the elements of $x$. In order to extract the neighbourhood information out of the global configuration, we define a family of set-valued functions $n_i$ in the following way: 
\begin{align*}
  n_i : \mathcal{X}^N \longrightarrow  \{ \emptyset  \} \cup \left( \bigcup\limits_{l=1  }^{N-1}  \mathcal{X}^l  \right)  \, \text{ for } i \in \setN{N},
\end{align*}
such that, for $x=(x_1,x_2,\ldots, x_N) \in  \mathcal{X}^N$,
\begin{align}
n_i (x) =  \begin{cases}
(x_{i_1},x_{i_2}, \ldots, x_{i_l}) &  \text{ if } (i,i_j) \in E \, \forall j = 1,2,\ldots, l \, \text{ and } l=d_i, \\
    \emptyset & \text{ otherwise}\eqstop
\end{cases}
\end{align}
Having defined the these two important functions, we now define the interaction rules by means of local transition intensities. We assume the intensities depend only on the type of local transition and the summary of the neighbourhood configuration of a vertex. 
Therefore, we define the local intensity function 
\begin{align}
  \label{lump:eq:localIntensityFunction}
  \gamma :  \mathcal{X}\times \mathcal{X} \times  \left(  \{ \emptyset \} \cup \left(  \bigcup\limits_{l=1  }^{N-1}  \diophantine{l}{K}   \right) \right) \longrightarrow \setOfPositiveReals \eqcomma
\end{align}
where we interpret $ \gamma(a,b, y)  $ as the local intensity of making a transition from local state $a$ to $b$ by a vertex when the summary of its neighbourhood configuration is $y$. 

We are now in a position to specify the transition rate or the infinitesimal generator matrix for our MABM $X$. Note that the process $X$ jumps from a state $x$ to $y$ whenever one of the local processes $X_i$'s  jumps. Therefore, only one of the coordinates of the states $x$ and $y$ differ. Let the $K^N \times K^N$ matrix  $Q = ((q_{x,y}))$ denote the transition rate matrix of $X$. The elements of the matrix $Q$ are given by
\begin{align}
  q_{x,y}= \begin{cases}
  \sum_{i \in \setN{N}}  \indicator{ x_i \neq y_i, x_j=y_j \, \forall j \in V\setminus \{i\}  }   \gamma  (x_i, y_i, c(n_i (x))    )     & \text{ if } x \neq y \eqcomma \\
    - \sum_{y \neq x} q_{x,y} & \text{ if } x=y \eqstop
\end{cases}
\end{align}
We interpret $q_{x,y}$ as the rate of transition from $x$ to $y$, where $x,y \in \mathcal{X}^N$. For ease of understanding, we have suffixed the entries  of $Q$ by the different configurations $x,y \in \mathcal{X}^N$ and interpret them as functions on $\mathcal{X}^N\times \mathcal{X}^N$, instead of introducing a bijection between $\mathcal{X}^N$ and $\setN{ K^N  }$  to label the states in a linear order so that the suffixes range over the integers from $1$ to $K^N$. Note that  the particular choice of bijection to label the states is immaterial for our purposes, because such a  bijection essentially yields a permutation of $\setN{K^N}$, and  in the light of Proposition~\ref{lump:proposition:permutation}, does not alter lumpability properties of $Q$. Finally, we study  the dynamics of $X$ via the linear system
\begin{align}
  \label{lump:eq:KolmogorovEqn}
  \dot{p}  = p Q \eqstop
\end{align}
The vector-valued  function $p$  gives the probability distribution of $X$. 

%
%
\subsection{Examples}

\paragraph{Susceptible-infected-susceptible (SIS) epidemics}
The SIS epidemic model \cite{Simon2011Exact} captures the dynamics of an epidemic spread over a human or an animal population. It encapsulates binary dynamics in the sense that the local state space is written as $\mathcal{X} \defeq \{1,2\}$, where $1$ indicates susceptibility and $2$, an infected status.  Infected vertices infect one of its randomly chosen neighbours at each ticking of a Poisson clock with a fixed rate~$a>0$. Infected vertices themselves recover to susceptibility at a rate $b\geq 0$, independent of the neighbours' statuses. When $b=0$, the model is called a susceptible-infected (SI) model. 
Therefore, the local transition intensities are given by
\begin{align*}
  \gamma(1,2, (x_1,x_2)) = x_2 a, \quad \text{ and } \quad   \gamma(2,1, (x_1,x_2)) =b \eqstop
\end{align*}
We set $\gamma$ to zero in every other case. This fully describes the dynamics of the system. 

\paragraph{Peer-to-peer live media streaming systems}
Peer-to-peer networks are engineered networks where the vertices, called peers, communicate with each other to perform certain tasks in a distributed fashion. In particular,  content delivery platforms such as BitTorent, file sharing platforms such as Gnutella, (live) media (audio/video)  streaming platforms use peer-to-peer networks. For the purposes of performance analysis, Markov chain models are often used for such systems. 

In a peer-to-peer live streaming system, each peer maintains a buffer of length~$L$. The availability of a media chunk at buffer index $i\in \setN{L}$ is indicated by $1$, and likewise unavailability, by $0$ (see \cite{KhudaBukhsh2016P2PNetworking}). Therefore, local state space is given by $\mathcal{X}=\{0,1\}^L$. Put $K=2^L$ so that $\{0,1\}^L$ can be put in one-to-one correspondence with $\setN{K}$.  The chunk at buffer index~$L$, if available, is played back at rate unity and then removed. After playback, all other chunks are moved one index to the right, \ie, the chunk at buffer index~$i$ to shifted to buffer index~$i+1$. The central server selects a finite number of peers at random and uploads chunks at buffer index~$1$. All other peers (not receiving chunks from the server) download chunks from their neighbours, following a \emph{pull} mechanism\footnote{There are also systems where the peers \emph{push} chunks into their neighbours' buffers instead of pulling.}. The peers maintain their private Poisson clocks at the tickings of which they contact their neighbours to download missing chunks. Let the rate of these Poisson clocks be $a>0$. The neighbours oblige the request if the requested chunk is available. When multiple chunks are missing, the peers prioritise the chunks in some way giving rise to different chunk selection strategies, such as the latest deadline first (LDF) and the earliest deadline first (EDF) strategies. Let us introduce a function, called chunk selection function that captures this prioritisation, usually represented as probabilities. Let $s : \setN{L}\times \mathcal{X} \times \mathcal{X}$ be the chunk selection function. We interpret  $s(i, u,v)$ as the probability of a vertex with buffer configuration~$u$ selecting to fill buffer index~$i$ when it contacts a neighbour with buffer configuration~$v$.  Let $y_1,y_2,\ldots,y_K$ be a linear arrangement of the states in $\mathcal{X}$. Denote the $j$-th component of $y_i$ by $y_{i,j}$, \ie, $y_i=(y_{i,1}, y_{i,2}, \ldots, y_{i,L})$. The local intensity function is then given by \cite{KhudaBukhsh2016P2PNetworking}
\begin{align*}
  \gamma(u,u+e_j, (x_1,x_2,\ldots,x_K)) =
    a \sum_{ i \in \setN{K}}   \indicator{ y_{i,j}=1  }  x_i    s(j,u,y_i) & \text{ if } j >1 \eqcomma
\end{align*}
where $e_j$ is the $j$-th unit vector in the $L$-dimensional Euclidean space, and  $ (x_1,x_2,\ldots,x_K)  $ is the population count vector of the neighbours of a vertex with different buffer configurations. Besides the above transitions due to download of a chunk from a neighbour, there are two other transitions, namely, the transition due to the shifting after playback that takes place at rate unity irrespective of the buffer configurations of the neighbours, and the transition due to being directly served by the server. The latter event also takes place irrespective of the buffer configurations of the neighbours, but a rate that depends on the exact implementation setup of the peer-to-peer system. See  \cite{KhudaBukhsh2016P2PNetworking} for a detailed account  on this.




\setcounter{equation}{0}
\section{Automorphism-based lumping of an MABM}
\label{lump:sec:automorphismLumpCPRG}
Now we discuss how graph automorphisms can be used to lump states of $X$. The idea was introduced by \cite{Simon2011Exact}) for SIS epidemics on graphs. The purpose of lumping states is to generate a Markov chain on a smaller state space. However, we should make sure that the loss of information is not too much. For instance,  $X$ is always lumpable with respect to the partition $\{\mathcal{X}^N\} $, but   if all states are lumped together, all information about the dynamics of $X$ are lost except for the fact that total probability is conserved at all times. On the other hand, $X$ is also lumpable with respect to the partition $ \{   \{x\} \mid x \in \mathcal{X}^N  \} $, which retains all the information but does not yield any state space reduction. Therefore, one needs to find a meaningful partition that yields as much state space reduction as possible with minimal loss of information. For an MABM, population counts are very useful quantities. Therefore, in order to retain information about the population counts, we first partition $\mathcal{X}^N$ into  $\{    \mathcal{X}_a \mid a \in \diophantine{N}{K}  \}$, \ie,
\begin{align}
  \label{lump:eq:basic_partition}
  \mathcal{X}^N = \cup_{a \in \diophantine{N}{K}   }   \mathcal{X}_a  \text{ where }     \mathcal{X}_a  \defeq \{  b \in   \mathcal{X}^N \mid c(b)=a    \}   \eqcomma
\end{align}
 and then seek  a lumpable partition that is ideally  \emph{minimally}  finer than this. The partition in \autoref{lump:eq:basic_partition}   lumps together states that produce the same population counts.  The size of this partition, \ie,  $ \cardinality{   \{    \mathcal{X}_a \mid a \in \diophantine{N}{K}  \}   }  $, is $\binom{N+K-1}{K-1} $.  Note that, in the standard mean-field approach, one assumes that $X$ is lumpable with respect to the partition in \autoref{lump:eq:basic_partition} and studies (approximate) master equations (Kolmogorov forward equations) corresponding to the different population counts.   Next, we refine this partition using  automorphisms.


A bijection $f : V \longrightarrow V$ is called an automorphism on $G$ if $(i,j)\in E$ if and only if $(f(i), f(j)) \in E$, for all $i,j \in V$  (see \cite{godsil2013algebraic}). 
The collection of all automorphisms forms a group under the composition of maps. This group is denoted by $\Aut{G}$. Clearly, $\Aut{G}$ is a subgroup of $\SymmetryGroup{ V }$. In order to use automorphisms to produce a partition of $\mathcal{X}^N$, we shall let $\Aut{G}$ act on $\mathcal{X}^N$. We define the following group action (a map from $ \Aut{G}\times \mathcal{X}^N $ to $ \mathcal{X}^N$):
\begin{align}
  \label{lump:eq:group_action}
  f\cdot  x = y \in \mathcal{X}^N \iff x_{f(i) } = y_{ i }\, \forall i \in \setN{N} \, \text{ for } f  \in \Aut{G}, x \in \mathcal{X}^N  \eqstop
\end{align}
The rationale is that, for our purpose, an automorphism needs to preserve the local states of vertices as well. Note that the action of the group $\Aut{G}$ defined above can be used to introduce an equivalence relation on $ \mathcal{X}^N$ as follows: we say $x$ and $y $ are equivalent with respect to the action of $\Aut{G}$, denoted as $x\sim y$, if and only if there exists an $f \in \Aut{G}$ such that $  f\cdot  x = y $. The equivalence classes $\{ \tilde{\mathcal{X}}_1, \tilde{\mathcal{X}}_2, \ldots, \tilde{\mathcal{X}}_M     \}$   of the  relation $\sim$ yield a lumpable partition of $\mathcal{X}^N$.  Moreover, the partition thus obtained is finer than $  \{    \mathcal{X}_a \mid a \in \diophantine{N}{K}  \}$.  We prove this in the following.

\begin{myProposition}
  The partition $\{ \tilde{\mathcal{X}}_1, \tilde{\mathcal{X}}_2, \ldots, \tilde{\mathcal{X}}_M     \}$  induced by the equivalence relation $\sim$, \ie, the quotient space $ \mathcal{X}^N/\sim  $,   is a refinement of $  \{    \mathcal{X}_a \mid a \in \diophantine{N}{K}  \}$. That is, for each $i \in \setN{M}$, there exists an $a \in \diophantine{N}{K}$ such that $  \tilde{\mathcal{X}}_{i}  \subseteq \mathcal{X}_{a} $.
  \label{lump:propostion:refinement}
\end{myProposition}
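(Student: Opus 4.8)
The plan is to exploit the fact that the group action in \eqref{lump:eq:group_action} merely permutes the coordinates of a configuration, together with the permutation invariance of the summary function $c$. Concretely, I would fix an arbitrary equivalence class $\tilde{\mathcal{X}}_i$ and pick any two representatives $x, y \in \tilde{\mathcal{X}}_i$. By the definition of $\sim$, there exists $f \in \Aut{G}$ with $f\cdot x = y$, which by \eqref{lump:eq:group_action} means $x_{f(j)} = y_j$ for all $j \in \setN{N}$. Hence $y = (x_{f(1)}, x_{f(2)}, \ldots, x_{f(N)})$ is obtained from $x = (x_1, x_2, \ldots, x_N)$ by rearranging its entries according to the bijection $f$, i.e., the multiset of local states appearing in $y$ equals the multiset appearing in $x$.

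Next I would invoke the permutation invariance of $c$ noted right after its definition: since $y$ is a permutation of $x$, we have $c(x) = c(y)$. Thus, setting $a \defeq c(x) \in \diophantine{N}{K}$, both $x$ and $y$ belong to $\mathcal{X}_a = \{ b \in \mathcal{X}^N \mid c(b) = a \}$ by \eqref{lump:eq:basic_partition}. Because $x$ and $y$ were arbitrary elements of $\tilde{\mathcal{X}}_i$, this shows that $c$ is constant on $\tilde{\mathcal{X}}_i$, so the value $a$ is well defined independently of the chosen representative, and $\tilde{\mathcal{X}}_i \subseteq \mathcal{X}_a$. Since $i \in \setN{M}$ was arbitrary and the $\mathcal{X}_a$ are pairwise disjoint, every block of the partition $\mathcal{X}^N/\sim$ is contained in exactly one block of $\{ \mathcal{X}_a \mid a \in \diophantine{N}{K} \}$, which is precisely the claim that the former is a refinement of the latter.

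There is no real obstacle here; the argument is a direct unfolding of definitions. The only point that deserves a careful sentence is the translation of the equivalence $f \cdot x = y$ into the statement ``$y$ is a coordinate permutation of $x$'', so that the permutation invariance of $c$ applies verbatim. If one wanted to be fully rigorous one could also remark that it suffices to check $c(x) = c(y)$ for a single pair related by $\sim$ (rather than for the whole orbit at once), since $\sim$ is an equivalence relation and hence transitive, and $c$ being constant on generators of the relation forces it to be constant on each class.
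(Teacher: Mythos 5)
Your argument is correct and matches the paper's own proof: both fix a class $\tilde{\mathcal{X}}_i$, use the definition of $\sim$ to obtain an automorphism $f$ with $f\cdot x = y$, observe that the action is a coordinate permutation, and invoke the permutation invariance of $c$ to conclude $c(x)=c(y)$ and hence $\tilde{\mathcal{X}}_i \subseteq \mathcal{X}_{c(x)}$. No gaps.
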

\begin{proof}
Pick any $  \tilde{\mathcal{X}}_{i}   $ and  $ x \in  \tilde{\mathcal{X}}_{i}   $. Then, $a=c(x) \in \diophantine{N}{K}$, and therefore, $x \in \mathcal{X}_{a}$. The proof completes when we show that every other $y$ in  $  \tilde{\mathcal{X}}_{i}   $ is also in $\mathcal{X}_{a}$. Now, $y \in  \tilde{\mathcal{X}}_{i}   $ implies $x \sim y$, and therefore, there exists an $f \in   \Aut{G}$ such that $f\cdot x=y$. From the permutation invariance of $c$, we get $c(y)= c( f\cdot x  ) = c(x)=a$ implying $y \in \mathcal{X}_{a}$.
\end{proof}

\begin{myTheorem}
    The  CTMC $X$ with transition rate matrix~$Q$   (or equivalently the linear system  $  \dot{p}=p Q$) is lumpable with respect to the quotient space $ \mathcal{X}^N/\sim  $,  the partition $\{ \tilde{\mathcal{X}}_1, \tilde{\mathcal{X}}_2, \ldots, \tilde{\mathcal{X}}_M     \}$  induced by the equivalence relation $\sim$.
    \label{lump:proposition:automorphism_lumpability}
\end{myTheorem}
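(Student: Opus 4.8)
The plan is to verify Dynkin's criterion for the CTMC $X$ with respect to the partition $\{\tilde{\mathcal{X}}_1, \ldots, \tilde{\mathcal{X}}_M\}$: for any equivalence class $\tilde{\mathcal{X}}_i$, any target class $\tilde{\mathcal{X}}_j$, and any two states $x, x' \in \tilde{\mathcal{X}}_i$, I must show $\sum_{y \in \tilde{\mathcal{X}}_j} q_{x,y} = \sum_{y \in \tilde{\mathcal{X}}_j} q_{x',y}$. Since $x \sim x'$, there is an automorphism $f \in \Aut{G}$ with $f \cdot x = x'$. The natural strategy is to show that $f$ induces a bijection on $\mathcal{X}^N$ (namely $x \mapsto f \cdot x$) that (i) preserves each equivalence class $\tilde{\mathcal{X}}_j$ setwise, and (ii) preserves transition rates, i.e. $q_{x, y} = q_{f \cdot x, f \cdot y}$ for all $x, y$. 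Given (i) and (ii), the map $y \mapsto f \cdot y$ is a rate-preserving bijection from $\tilde{\mathcal{X}}_j$ to itself, so $\sum_{y \in \tilde{\mathcal{X}}_j} q_{x,y} = \sum_{y \in \tilde{\mathcal{X}}_j} q_{f\cdot x, f \cdot y} = \sum_{z \in \tilde{\mathcal{X}}_j} q_{x', z}$, which is exactly Dynkin's criterion.

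First I would establish (i): if $y \sim y'$ then $f \cdot y \sim f \cdot y'$, which is immediate because $\Aut{G}$ is a group — if $g \cdot y = y'$ then $(f g f^{-1}) \cdot (f \cdot y) = f \cdot y'$ and $f g f^{-1} \in \Aut{G}$ — and in particular $f \cdot \tilde{\mathcal{X}}_j = \tilde{\mathcal{X}}_j$ since applying $f$ and then $f^{-1}$ returns to the class. Then the core of the argument is (ii). Here I would unwind the definition of $q_{x,y}$. A transition $x \to y$ with $q_{x,y} > 0$ changes exactly one coordinate, say coordinate $i$, from $x_i$ to $y_i$, with rate $\gamma(x_i, y_i, c(n_i(x)))$. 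Applying $f$, the configurations $f \cdot x$ and $f \cdot y$ differ in exactly the coordinate $f^{-1}(i)$ (by the definition $f \cdot x$ has $(f\cdot x)_{k} = x_{f(k)}$), and the rate of that transition is $\gamma\big((f\cdot x)_{f^{-1}(i)}, (f\cdot y)_{f^{-1}(i)}, c(n_{f^{-1}(i)}(f \cdot x))\big) = \gamma\big(x_i, y_i, c(n_{f^{-1}(i)}(f \cdot x))\big)$. So the first two arguments of $\gamma$ already match; the remaining task is to show the neighbourhood summaries agree: $c(n_i(x)) = c(n_{f^{-1}(i)}(f \cdot x))$.

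The main obstacle — and the step deserving the most care — is precisely this last identity $c(n_i(x)) = c(n_{f^{-1}(i)}(f\cdot x))$. The point is that $f$ being an automorphism means $f^{-1}$ maps the neighbourhood of $f^{-1}(i)$ bijectively onto the neighbourhood of $i$ (since $(f^{-1}(i), k) \in E \iff (i, f(k)) \in E$), and along this bijection the local states are preserved: the state at neighbour $k$ of $f^{-1}(i)$ in configuration $f \cdot x$ is $(f \cdot x)_k = x_{f(k)}$, which is the state at the corresponding neighbour $f(k)$ of $i$ in configuration $x$. Hence the multiset of neighbour-states of $f^{-1}(i)$ under $f \cdot x$ equals the multiset of neighbour-states of $i$ under $x$; since $c$ is permutation invariant (only depends on the multiset), the two summaries coincide. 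One should also handle the degenerate case where $i$ is isolated (then so is $f^{-1}(i)$, and $n$ returns $\emptyset$ in both cases, with $c(\emptyset) = \emptyset$). Putting these together gives $q_{x,y} = q_{f\cdot x, f\cdot y}$ for all off-diagonal entries, the diagonal entries follow by the row-sum definition, and combined with (i) this yields Dynkin's criterion and hence lumpability. As a final remark, I would note that this also matches the alternative route via uniformization (equation~\eqref{lump:eq:unif_agg_equality} together with Remark~\ref{lump:proposition:permutation}), but the direct verification above is self-contained.
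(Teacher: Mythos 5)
Your proposal is correct and follows essentially the same route as the paper: both verify Dynkin's criterion by using the automorphism $f$ to set up a rate-preserving bijection $y \mapsto f\cdot y$ on each target class $\tilde{\mathcal{X}}_j$, with the key step being the invariance of the neighbourhood summary, which is exactly the content of Lemma~\ref{lump:lemma:neighbourhood} (your version, stated for $c(n_i(x))$ rather than $n_i(x)$ itself, is if anything slightly more careful about the ordering of neighbours). The only cosmetic difference is that you modularize the argument into class-invariance plus $q_{x,y}=q_{f\cdot x,f\cdot y}$, whereas the paper carries out the change of variables inside the sum directly.
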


Before proving Theorem~\ref{lump:proposition:automorphism_lumpability}, we prove the following useful lemma regarding the neighbourhood function and the action of the group $\Aut{G}$.

\begin{myLemma}
  For all $i \in \setN{N}$ and for any $z \in \mathcal{X}^N$, the following is true for all $f \in \Aut{G}$: 
  \begin{align}
    n_{f^{-1}(i)}(f \cdot z) = n_i(z) \eqstop
    \label{lump:eq:neighbourhood_invariance}
  \end{align}
  \label{lump:lemma:neighbourhood}
\end{myLemma}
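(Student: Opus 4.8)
The plan is to unwind the definitions of the neighbourhood function $n_i$ and the group action of $\Aut{G}$ on $\mathcal{X}^N$, and check the identity coordinate by coordinate. Fix $f \in \Aut{G}$, fix $z \in \mathcal{X}^N$, fix $i \in \setN{N}$, and let $j \defeq f^{-1}(i)$, so $f(j) = i$. We must compare $n_j(f \cdot z)$ with $n_i(z)$. First I would observe that, because $f$ is an automorphism, the neighbours of $j$ in $G$ are exactly the $f^{-1}$-preimages of the neighbours of $i = f(j)$: indeed $(j, k) \in E \iff (f(j), f(k)) \in E \iff (i, f(k)) \in E$. Hence $d_j = d_i$, so the ``otherwise'' branch of the definition of $n$ is triggered for $j$ (with respect to $f \cdot z$) exactly when it is triggered for $i$ (with respect to $z$), and in the non-isolated case both neighbourhood lists have the same length $\ell = d_i = d_j$.

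Next I would pin down what the entries of $n_j(f \cdot z)$ are. Write $w \defeq f \cdot z$, so by \eqref{lump:eq:group_action} we have $w_m = z_{f(m)}$ for all $m$. If $k_1, \dots, k_\ell$ enumerate the neighbours of $j$, then $n_j(w) = (w_{k_1}, \dots, w_{k_\ell}) = (z_{f(k_1)}, \dots, z_{f(k_\ell)})$. By the observation above, $f(k_1), \dots, f(k_\ell)$ is precisely an enumeration of the neighbours of $i$. Therefore $n_j(w)$ is a tuple listing the $z$-values over the neighbours of $i$ — which is exactly what $n_i(z)$ is, possibly up to the order in which the neighbours are enumerated.

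The only subtlety — and the step I expect to need the most care — is the ordering convention in the definition of $n_i$: the list $(x_{i_1}, \dots, x_{i_l})$ depends on how the neighbours $i_1, \dots, i_l$ of $i$ are indexed, and a priori the enumeration of $i$'s neighbours induced by $f$ from a chosen enumeration of $j$'s neighbours need not match the canonical one. Here I would invoke the permutation-invariance machinery already in place: the summary function $c$ satisfies $c(x) = c(x')$ whenever $x'$ is a permutation of $x$, and in the eventual application (the proof of Theorem~\ref{lump:proposition:automorphism_lumpability}) $n_i$ only ever appears composed with $c$, via $c(n_i(x))$. So either one adopts the reading that $n_i(x)$ denotes the neighbourhood multiset, in which case \eqref{lump:eq:neighbourhood_invariance} is a literal equality of multisets as just shown, or one states the lemma modulo reordering and notes that this is harmless because $c \circ n_{f^{-1}(i)}(f \cdot z) = c \circ n_i(z)$ follows immediately from permutation-invariance of $c$. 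I would write the proof in the multiset reading to keep \eqref{lump:eq:neighbourhood_invariance} as stated, remarking that the equality holds as an identity of unordered neighbourhood configurations, which is all that is used downstream. Finally I would dispose of the isolated-vertex case separately: if $i$ has no neighbours then neither does $j = f^{-1}(i)$, so both sides equal $\emptyset$, completing the proof.
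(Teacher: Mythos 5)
Your proof is correct and follows essentially the same route as the paper's: set $k = f^{-1}(i)$, use the group action to rewrite the entries of $n_k(f\cdot z)$ as $z_{f(\cdot)}$, and use the automorphism property to identify the $f$-image of $k$'s neighbourhood with $i$'s neighbourhood. Your explicit treatment of the enumeration-order subtlety (resolved by reading the equality as one of unordered configurations, harmless since $n_i$ is only ever used under the permutation-invariant summary $c$) is a point the paper's proof glosses over, but it does not change the argument.
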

\begin{proof}[Proof of Lemma~\ref{lump:lemma:neighbourhood}]
Let us put $  f \cdot z=x$ and $f^{-1}(i)=k $.  If $d_k=0$, the assertion follows immediately because both sides of \autoref{lump:eq:neighbourhood_invariance}  are the empty set. Therefore, we assume $d_k=l>0$.  Then,
\begin{align*}
  n_{f^{-1}(i)}(f\cdot z) ={} & ( x_{i_1}, x_{i_2}, \ldots, x_{i_l}  ) \text{ if } (i_j, k) \in E\; \forall j \in \setN{i_l} \\
  ={} & ( z_{f(i_1)}, z_{f(i_2)}, \ldots, z_{f(i_l)}  )  \text{ if } (i_j, k) \in E\; \forall j \in \setN{i_l} \\
  ={} & n_{f(k)}(z) \eqcomma
\end{align*}
but $f(k)=i$ implying $  n_{f^{-1}(i)}(f\cdot z)  = n_i (z)$.
\end{proof}

Now we present the proof of Theorem~\ref{lump:proposition:automorphism_lumpability}.
\begin{proof}[Proof of Theorem~\ref{lump:proposition:automorphism_lumpability}]
  We check the  Dynkin's criterion to establish lumpability. For any two distinct  $i,j \in \setN{M}$, we check if $  \tilde{q}_{i,j}=  \sum_{y    \in  \tilde{\mathcal{X}}_j  }  q_{x,y}= \sum_{y    \in  \tilde{\mathcal{X}}_j  }  q_{z,y}   $ for each distinct pair  $x, z   \in  \tilde{\mathcal{X}}_i $. Since $z\sim x$, there exists an $f   \in \Aut{G}$ such that $f\cdot z=x$. The idea is to apply $f$ on the states of $ \tilde{\mathcal{X}}_j   $ and then show that, for any two states    $x, z \in  \tilde{\mathcal{X}}_i $, there are two states $y, f\cdot y \in  \tilde{\mathcal{X}}_j  $ such that the neighbourhood information are preserved. 
\begin{align*}
 \sum_{y    \in  \tilde{\mathcal{X}}_j  }  q_{x,y} = {}&  \sum_{y   \in  \tilde{\mathcal{X}}_j  } \sum_{i \in \setN{N}}  \indicator{ x_i \neq y_i, x_j=y_j \, \forall j \neq i   }   \gamma  (x_i, y_i, c(n_i (x))    ) \\
   ={} &   \sum_{f\cdot   y   \in  \tilde{\mathcal{X}}_j  } \sum_{i \in \setN{N}}  \indicator{ x_i \neq y_{f(i)}, x_j=y_{f(j)}  \, \forall j \neq i   }   \gamma  (x_i, y_{f(i)}, c(n_i (x))    ) \\
    ={} &   \sum_{ f\cdot   y   \in  \tilde{\mathcal{X}}_j  } \sum_{i \in \setN{N}}  \indicator{ z_{f(i)}\neq y_{f(i)}, z_{f(j)}=y_{f(j)}  \, \forall j \neq i   }   \gamma  (z_{f(i)}, y_{f(i)}, c(n_i ( f\cdot z ))    ) \\
    ={} &   \sum_{   f\cdot   y   \in  \tilde{\mathcal{X}}_j  } \sum_{ f^{-1}( i) \in \setN{N}}  \indicator{ z_{i}\neq y_{i}, z_{j}=y_{j}  \, \forall j \neq i   }   \gamma  (z_{i}, y_{i}, c(n_i (  z ))    )  \\
  ={ } &  \sum_{   y   \in  \tilde{\mathcal{X}}_j  } \sum_{i \in \setN{N}}  \indicator{ z_{i}\neq y_{i}, z_{j}=y_{j}  \, \forall j \neq i   }   \gamma  (z_{i}, y_{i}, c(n_i (  z ))    )
  ={}   \sum_{y    \in  \tilde{\mathcal{X}}_j  }  q_{z,y}  \eqcomma
\end{align*}
where  we have used $   n_{f^{-1}(i) }( f \cdot z ) = n_i (z)$ from \autoref{lump:lemma:neighbourhood}. Denoting  common value by $  \tilde{q}_{i,j}=  \sum_{y    \in  \tilde{\mathcal{X}}_j  }  q_{x,y} $, the matrix  $\tilde{Q} = ((  \tilde{q}_{i,j} ))$ is   the transition rate matrix of $\aggregation{X}$. 
\end{proof}

\begin{myRemark}
  From the perspective of group theory, finding the lumping classes is equivalent to determining the orbits of  states  in $\mathcal{X}^N$ with respect to the group $\Aut{G}$. For a state $x \in \mathcal{X}^N$, the orbit of $x$ with respect to the action of the group $\Aut{G}$, denoted as $\Aut{G} \cdot x$,  is defined by $\Aut{G}\cdot x = \{ f\cdot x \mid f \in \Aut{G}  \}$.
\end{myRemark}


%
%
%

\begin{myExample}[Complete graph]
The automorphism group $\Aut{G}$ for the complete graph is $\SymmetryGroup{\setN{N}}$. Therefore, any two states $x,y \in \mathcal{X}^N$ can be lumped together if $y$ is a rearrangement of components of $x$, \ie, $y= f\cdot x$ for some $f \in \SymmetryGroup{\setN{N}}$. As a consequence, $\{    \mathcal{X}_a \mid a \in \diophantine{N}{K}  \}$ itself is a lumpable partition of $\mathcal{X}^N$.
\end{myExample}

\begin{myExample}[Star graph]
An automorphism on a star graph  leaves the central node (root) unchanged and  permutes the rest of the nodes (leaf nodes) in any possible manner. Without loss of generality, let us assume the central node is labelled $N$. Then, the automorphism group $\Aut{G}$ is given by $ \Aut{G} =\{g \in \SymmetryGroup{\setN{N}} \mid g(N)= N, g(i)=f(i) \forall i\in \setN{N-1} \text{ for some }  f \in \SymmetryGroup{\setN{N-1}} \} $.
\end{myExample}

\begin{myExample}[Cycle graph]
The automorphisms of a cycle graphs are the reflections and rotations of the graph, forming a group that is also known as the dihedral group. Therefore, there are $2N$ automorphisms. In \cite{Simon2011Exact}, the authors show that  the dihedral group leads to a non-trivial lumping of states.
\end{myExample}

\begin{myExample}[Trees]
For a star graph, we noted that an automorphism  permutes the leaves but  needs to leave the root unchanged. Similarly, for a tree, we start with the leaves.  Any two leaves connected to the same parent node can be freely permuted. However, whenever we permute two leaf nodes that have different parents, we also need to permute the parents to preserve the neighbourhood structure. Therefore, an  automorphism on a tree necessarily maps vertices to vertices at the same height.
\end{myExample}


%
%
%
%
%


\setcounter{equation}{0}
\section{Lumping states using local symmetry}
\label{lump:sec:localSymmetry}

In this section, we discuss lumping ideas based on a local notion of automorphism. In many cases, the number of automorphisms decrease drastically as the graph grows arbitrarily large. For instance, it is known that  Erd\"{o}s-R\'{e}nyi random graphs tend to be asymmetric with probability approaching unity as the size of the graph~$N$ grows to infinity \cite{luczak1988automorphism}. Similar statements are true for $d$-regular random graphs under various sets of conditions on $d$ relative to the number of vertices~$N$ \cite{Kim2002RRG}, and random graphs with specified degree distributions \cite{McKay1984Auto}.  As a consequence, the automorphism-based lumping tends to be ineffective in state space reduction as the size of the graphs grows arbitrarily. Therefore, it is desirable to bring in a notion of local automorphism or local symmetry that would allow swapping vertices that are locally indistinguishable (\ie, have similar neighbourhoods), but are not  so globally. This notion of symmetry is  weaker than an automorphism, which endows global symmetry on a graph. However, the potential gain is in the ability to engender state space reduction when the graph grows arbitrarily large rendering automorphism-based lumping virtually ineffective. In the following, we make these ideas precise.

\subsection{Local symmetry}
There have been several attempts to formulate a more flexible notion of local symmetry. However, the literature seems divided on this and there is not a single  universally accepted concept. In our setup, it seems intuitive that two vertices that are locally indistinguishable in a large graph would also behave indistinguishably, and therefore, can be swapped. A notion of local symmetry identifying  such vertices was proposed in \cite{Barbosa2016LocalSymmetry}, which we adopt in this paper. We need a few definitions to make precise what we mean by two vertices being locally indistinguishable.


In order to define locality, we need some notion of distance between vertices of $G$. Let $d(u,v)$ denote the smallest distance (length of the minimal path) between two vertices $u,v \in V$. If $u$, and $v$ are not connected, \ie, there is no path between them, we simply set $d(u,v)=\infty$. 

\begin{myDefinition}
  Given a vertex $u \subseteq V$, define its $k$-neighbourhood in $G$, denoted by $N_{k}(u)$,  as follows:
  \begin{align}
    N_{k} : V \longrightarrow 2^V \text{ such that } N_{k}(u)\defeq \{ v \in V \mid d(u,v) \leq k\} \eqstop
  \end{align}
\end{myDefinition}
%
Let $G[   N_{k}(u)]$ denote the  subgraph of $G$ induced by $ N_{k}(u) $.
The notion of locality we adopt in this paper hinges on these $k$-neighbourhoods and their induced subgraphs. If two vertices induce isomorphic subgraphs, they are  indistinguishable locally and  we say they are $k$-locally symmetric \cite{Barbosa2016LocalSymmetry}.
\begin{myDefinition}
Two vertices $u,v\in V$ are defined to be $k$-locally symmetric if there exists an isomorphism $f$ between $G[N_k(u)]$ and $G[N_k(v)]$ such that $ f(u)=v $.
  \label{lump:definition:k_localSymmetry}
\end{myDefinition}
Therefore, two vertices $u,v \in V$ are $k$-locally symmetric if their $k$-th order local structures ($k$-hop neighbourhoods) are equivalent in the sense that there is a structure-preserving (edge-preserving in this case) bijection between them.  When $k=1$, we simply say the vertices are \emph{locally} symmetric.

As with automorphism, local symmetries also induce an equivalence relation on the set of vertices $V$. We say two vertices $u,v\in V$ are equivalent with respect to $k$-local symmetry, denoted by $u \overset{k} \sim v$,  if there exists an isomorphism $f$ between $G[N_k(u)]$ and $G[N_k(v)]$ such that $f(u)=v$. The notion of local symmetry is related to the concept of views in discrete mathematics literature \cite{Hendrickx2014Views,YamashitaKameda96}.  The view of depth $k$ of a vertex is a tree containing all walks of length $k$ leaving that vertex.  However, please note that, in our context, the induced subgraphs $G[N_k(u)]$ need not be trees. 
The following facts about local symmetry are useful for our study of lumpability \cite{NORRIS1995Covers,Barbosa2016LocalSymmetry}.
\begin{myProposition}
  \label{lump:proposition:local_symmetry_properties}
  The following properties are satisfied by $k$-local symmetry:
  \begin{enumerate}[label=\color{tud3d}{P\arabic*}]
    \item \label{lump:proposition:local_symmetry_properties:item1} For  $u,v \in V$, $ u \overset{k+1} \sim v  \implies u \overset{k} \sim v   $. Consequently, ${V}/\overset{k+1} \sim$ , the equivalence classes of $\overset{k+1} \sim$   form a refinement of ${V}/\overset{k} \sim$, the equivalence classes of $\overset{k} \sim$.
    \item \label{lump:proposition:local_symmetry_properties:item2} If the equivalence classes of $ \overset{k+1} \sim$ are the same as those of $ \overset{k} \sim$, the equivalence classes of \emph{all} $ \overset{k+j} \sim$ are the same as those of $ \overset{k} \sim$, for $j \in \setOfNaturals$.
    \item \label{lump:proposition:local_symmetry_properties:item3} If $k \geq \mathrm{diam}(G)$, the diameter of $G$, then, for two vertices $u,v \in V$, we have  $ u  \overset{k} \sim v  \iff  $ there exists an $f \in \Aut{G}$ such that $f(u)=v$. That is, $k$-local symmetry is equivalent to automorphism if $k$ is as large as the diameter of $G$.
  \end{enumerate}
\end{myProposition}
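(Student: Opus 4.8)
The plan is to treat the three properties in order; \ref{lump:proposition:local_symmetry_properties:item1} and \ref{lump:proposition:local_symmetry_properties:item3} are bookkeeping with graph distances, whereas \ref{lump:proposition:local_symmetry_properties:item2} carries the real content. For \ref{lump:proposition:local_symmetry_properties:item1}, suppose $f$ is an isomorphism $G[N_{k+1}(u)]\to G[N_{k+1}(v)]$ with $f(u)=v$; I would show that its restriction to $N_k(u)$ witnesses $u\overset{k}\sim v$. The only point to check is that $f$ maps $N_k(u)$ onto $N_k(v)$: for any $w$ with $d(u,w)\le k$, a shortest $u$--$w$ path in $G$ has all its vertices at distance $\le k$ from $u$, hence lies inside $N_k(u)\subseteq N_{k+1}(u)$, so the distance from $u$ measured inside $G[N_{k+1}(u)]$ agrees with $d(u,\cdot)$ on $N_k(u)$; since $f$ is a graph isomorphism with $f(u)=v$ it carries internal distances from $u$ to internal distances from $v$, and the same reasoning applied to $f^{-1}$ gives the required bijection $N_k(u)\to N_k(v)$. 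This restriction is still an induced-subgraph isomorphism, so $u\overset{k}\sim v$. The ``consequently'' clause is then immediate, since $\overset{k+1}\sim$ is a sub-relation of $\overset{k}\sim$ and hence each $\overset{k+1}\sim$-class lies inside a single $\overset{k}\sim$-class.

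For \ref{lump:proposition:local_symmetry_properties:item3}, if $k\ge\mathrm{diam}(G)$ then $G$ is connected and $N_k(u)=V$ for every $u\in V$, so $G[N_k(u)]=G[N_k(v)]=G$; hence a $k$-local symmetry carrying $u$ to $v$ is literally an element of $\Aut{G}$ with $f(u)=v$, giving the forward implication, and the reverse implication is just the restriction observation of \ref{lump:proposition:local_symmetry_properties:item1} (an automorphism preserves all distances, so it restricts to a $k$-local symmetry for every $k$). If $G$ is disconnected then $\mathrm{diam}(G)=\infty$ and the hypothesis is vacuous; reading $\mathrm{diam}$ componentwise instead, the same argument goes through after extending a component isomorphism by the identity, respectively by a swap of the two components involved, to all of $G$.

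The core is \ref{lump:proposition:local_symmetry_properties:item2}, which I would prove by induction on $j$; it then suffices to establish the one-step stabilisation: if $V/\overset{k+1}\sim$ and $V/\overset{k}\sim$ coincide, then $V/\overset{k+2}\sim$ equals $V/\overset{k+1}\sim$. By \ref{lump:proposition:local_symmetry_properties:item1} one inclusion of classes is automatic, so fix $u\overset{k+1}\sim v$ and aim for $u\overset{k+2}\sim v$. Pick an isomorphism $\phi\colon G[N_{k+1}(u)]\to G[N_{k+1}(v)]$ with $\phi(u)=v$. Running the restriction argument of \ref{lump:proposition:local_symmetry_properties:item1} around each neighbour $a$ of $u$ (valid because $N_k(a)\subseteq N_{k+1}(u)$) shows $a\overset{k}\sim\phi(a)$, hence $a\overset{k+1}\sim\phi(a)$ by the standing hypothesis; so around every neighbour of $u$ we have a radius-$(k+1)$ isomorphism matching it to a neighbour of $v$. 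Since $N_{k+2}(u)=\{u\}\cup\bigcup_{a:\{u,a\}\in E}N_{k+1}(a)$, and likewise for $v$, the strategy is to assemble these neighbour-level isomorphisms together with $\phi$ into a single isomorphism $G[N_{k+2}(u)]\to G[N_{k+2}(v)]$ sending $u$ to $v$.

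The main obstacle is exactly this assembly: the neighbour-level isomorphisms are chosen independently and need not agree on the overlaps $N_{k+1}(a)\cap N_{k+1}(b)$, nor with $\phi$, so one cannot simply take their union. I would resolve it by a careful patching argument --- growing $\phi$ outward one distance-shell at a time while maintaining global consistency, for instance through a maximal partial-isomorphism (Zorn) construction --- or, should a self-contained proof become unwieldy, by invoking the structural results on views and covers in \cite{Barbosa2016LocalSymmetry,NORRIS1995Covers}, where precisely this stabilisation phenomenon is established. Everything else in the proposition reduces to the distance-preservation bookkeeping used for \ref{lump:proposition:local_symmetry_properties:item1}.
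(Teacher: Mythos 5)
First, a point of reference: the paper does not actually prove Proposition~\ref{lump:proposition:local_symmetry_properties} --- it is stated with citations to \cite{NORRIS1995Covers,Barbosa2016LocalSymmetry} --- so your attempt cannot be measured against an in-paper argument. Your treatments of \ref{lump:proposition:local_symmetry_properties:item1} and \ref{lump:proposition:local_symmetry_properties:item3} are correct: the observation that $G$-distances from the centre agree with distances computed inside the induced ball (every vertex of a shortest $u$--$w$ path lies in $N_{d(u,w)}(u)$) is exactly what is needed to show that an isomorphism of $(k+1)$-balls matching the centres restricts to one of $k$-balls, and \ref{lump:proposition:local_symmetry_properties:item3} follows since $N_k(u)=V$ once $k\geq \mathrm{diam}(G)$.

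The gap is in \ref{lump:proposition:local_symmetry_properties:item2}, and it is fatal rather than technical. You correctly isolate the assembly of the neighbour-level isomorphisms into a single isomorphism of $(k+2)$-balls as the main obstacle, and then defer it; but no patching argument can close it, because \ref{lump:proposition:local_symmetry_properties:item2} is false for the induced-ball notion of local symmetry in Definition~\ref{lump:definition:k_localSymmetry}. Take $G=C_{10}\sqcup C_{11}$, the disjoint union of two cycles. For every vertex $u$ and every $k\leq 4$, the ball $N_k(u)$ is an arc of the ambient cycle, so $G[N_k(u)]$ is a path on $2k+1$ vertices with $u$ at its midpoint; hence $V/\overset{k}\sim$ is the trivial one-class partition for $k=1,2,3,4$, and in particular the classes of $\overset{2}\sim$ equal those of $\overset{1}\sim$. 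Yet $G[N_5(u)]$ is $C_{10}$ or $C_{11}$ according to the component of $u$, so $V/\overset{5}\sim$ has two classes. Your own construction shows concretely where the gluing breaks: with $k=3$, every neighbour $a$ of $u\in C_{10}$ satisfies $a\overset{k+1}\sim\phi(a)$ and $N_{k+2}(u)=\{u\}\cup\bigcup_a N_{k+1}(a)$ as you say, but the pairwise-isomorphic pieces overlap differently in the two components, and their unions are a $10$-cycle on one side and an $11$-cycle on the other. The fallback of citing \cite{NORRIS1995Covers} does not rescue the claim either: the stabilisation theorem there is for \emph{views} (equivalently, colour refinement), where the depth-$(k+1)$ class of a vertex is a function of its own depth-$k$ class and the multiset of depth-$k$ classes of its neighbours, so an unchanged partition is a fixed point of the refinement operator. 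Induced-ball isomorphism admits no such recursion, which is precisely why your induction cannot be completed. (Restricting to connected $G$ does not obviously repair the statement, and in any case the paper's setting explicitly allows disconnected graphs and isolated vertices.)
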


In addition to the above, it can be verified  that the quotient spaces ${V}/\overset{k} \sim$ are equitable partitions \cite[Chapter 9]{godsil2013algebraic} for each $k\geq 1$. We use these properties to lump states of $\mathcal{X}^N$ in the next section.

\subsection{Lumping states using local symmetry}
The procedure to lump states in $\mathcal{X}^N$ using local symmetry is similar to the procedure used to lump states using automorphism. However, unlike the case with automorphism, we now allow permutations that only need to ensure symmetry locally. That is, in order to lump states using $k$-local symmetry, we allow permuting two vertices $u$ and $v$ in $V$ if and only if $u$ and $v$ are $k$-local symmetric. Therefore, define
\begin{align}
  \Psi_k (G) \defeq \{ f \in \SymmetryGroup{V} \mid f(u)=v \iff u \overset{k}\sim v, \text{ for } u, v \in V \} \eqstop
\end{align}
We refer to   $\cardinality{  \Psi_k (G)  }$ as the number of local symmetries. It can be verified that $  \Psi_k (G)$, for each $k \geq 1$, forms a group under the composition of maps. Therefore, we can let the group $  \Psi_k(G) $ act on $\mathcal{X}^N$. We define the action of  $  \Psi_k(G) $ as follows:
\begin{align}
  \label{lump:eq:local_group_action}
  f\cdot  x = y \in \mathcal{X}^N \iff x_{f(i) } = y_{ i }\, \forall i \in \setN{N} \, \text{ for } f  \in  \Psi_k (G) , x \in \mathcal{X}^N  \eqstop
\end{align}
Note that a state $x$ in $\mathcal{X}^N$ is taken to $y$ if and only if the local states of all vertices are preserved and two vertices are swapped only when they are $k$-local symmetric. The above action induces the following partition of the state space: two states $ x,y \in \mathcal{X}^N$ are said to be equivalent with respect to $k$-local symmetry, denoted as $x \overset{k}\sim y$,   if there exists an $f \in  \Psi_k(G)$ such that $f\cdot x=y$. We use the same symbol $\overset{k}\sim$ since  there is no scope of confusion. The equivalence classes of $\overset{k}\sim $ are obtained, as before, by determining  the orbits of states in $\mathcal{X}^N$. The orbit of a state $x \in \mathcal{X}^N$ is given by $  \Psi_k(G) \cdot x \defeq \{  f\cdot x \in \mathcal{X}^N \mid f  \in  \Psi_k (G)  \}  $.

The partition thus obtained (based on $k$-local symmetry) does not, in general, guarantee lumpability, \ie, $X$ need to be lumpable with respect to $\mathcal{X}^N/\overset{k}\sim$. We say $X$ is approximately lumpable with respect to this partition and seek to quantify the approximation error in the next section.    The following observation is integral to the quantification of the approximation error incurred when states of $\mathcal{X}^N$ are lumped according to $k$-local symmetry instead of automorphism.

\begin{myProposition}
  The quotient space $\mathcal{X}^N/\overset{k+1}\sim$ is a refinement of $\mathcal{X}^N/\overset{k}\sim$.
  \label{lump:proposition:refinement_local_symmetry}
\end{myProposition}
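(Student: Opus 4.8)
The plan is to derive this refinement from its vertex-level counterpart, property~\ref{lump:proposition:local_symmetry_properties:item1} of Proposition~\ref{lump:proposition:local_symmetry_properties}, by transporting it through the group actions on $\mathcal{X}^N$. The key reduction is the containment of permutation groups $\Psi_{k+1}(G) \subseteq \Psi_k(G)$: once this is established, and since both groups act on $\mathcal{X}^N$ by the \emph{same} rule \autoref{lump:eq:local_group_action}, any two states equivalent under $\overset{k+1}\sim$ are automatically equivalent under $\overset{k}\sim$, which is precisely the assertion that $\mathcal{X}^N/\overset{k+1}\sim$ is a refinement of $\mathcal{X}^N/\overset{k}\sim$.

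To prove $\Psi_{k+1}(G) \subseteq \Psi_k(G)$, I would read the defining condition of $\Psi_k(G)$ as ``$f$ maps every vertex to a $k$-locally symmetric vertex'', i.e. $u \overset{k}\sim f(u)$ for all $u \in V$ --- equivalently, $f$ stabilises every block of the equitable partition $V/\overset{k}\sim$. Now fix $f \in \Psi_{k+1}(G)$, so that $u \overset{k+1}\sim f(u)$ for every $u \in V$. By property~\ref{lump:proposition:local_symmetry_properties:item1} each such relation implies $u \overset{k}\sim f(u)$, and this holds for all $u$; hence $f \in \Psi_k(G)$. (Phrased partition-wise: $V/\overset{k+1}\sim$ refines $V/\overset{k}\sim$, so any permutation preserving every block of the finer partition preserves every block of the coarser one; the group structure of $\Psi_k(G)$ is not needed here, only set containment.)

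To conclude, take $x, y \in \mathcal{X}^N$ with $x \overset{k+1}\sim y$. By definition there is an $f \in \Psi_{k+1}(G)$ with $f \cdot x = y$; by the previous step $f \in \Psi_k(G)$, and the value $f \cdot x$, being computed from $x_{f(i)} = y_i$ for all $i$, is the same regardless of which group $f$ is regarded as belonging to, so $f \cdot x = y$ also witnesses $x \overset{k}\sim y$. Hence every equivalence class of $\overset{k+1}\sim$ is contained in a single equivalence class of $\overset{k}\sim$. I do not anticipate a real obstacle: the argument is essentially bookkeeping on top of property~\ref{lump:proposition:local_symmetry_properties:item1}, and the only points that need a word of care are fixing the intended meaning of the defining condition of $\Psi_k(G)$ and observing that the two group actions agree on common elements, so that one witnessing permutation serves at both levels. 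In spirit, this proposition lifts the vertex-level refinement of property~\ref{lump:proposition:local_symmetry_properties:item1} to the state space $\mathcal{X}^N$, analogously to the way Proposition~\ref{lump:propostion:refinement} lifts the permutation invariance of $c$.
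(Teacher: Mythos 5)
Your proof is correct and follows essentially the same route as the paper's: both reduce the claim to the vertex-level refinement property~\ref{lump:proposition:local_symmetry_properties:item1} of Proposition~\ref{lump:proposition:local_symmetry_properties}. You make explicit the intermediate containment $\Psi_{k+1}(G)\subseteq\Psi_k(G)$ (and the sensible reading of its defining condition), which the paper leaves implicit when it applies the vertex-level implication directly to states; this is a welcome clarification rather than a different argument.
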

\begin{proof}[Proof of Proposition~\ref{lump:proposition:refinement_local_symmetry}]
  Let $ \mathcal{X}_{1}^{(k+1)} , \mathcal{X}_{2}^{(k+1)} ,\ldots,  \mathcal{X}_{M_{k+1}}^{(k+1)}  $ be the equivalence classes of $\overset{k+1} \sim$. Also, denote the equivalence classes of $\overset{k} \sim$ by  $ \mathcal{X}_{1}^{(k)} , \mathcal{X}_{2}^{(k)} ,\ldots,  \mathcal{X}_{M_k}^{(k)}  $. Let $ i \in  \setN{M_{k+1}}$ and $x \in  \mathcal{X}_{i}^{(k+1)} $. If $ \mathcal{X}_{i}^{(k+1)}  $ is singleton, identity map is the only map in $  \Psi_{k+1} (G)  $, but it is also in $\Psi_{k} (G)$. Therefore, $x \in  \mathcal{X}_{j}^{(k)} $ for some $j \in \setN{  M_k}$, and the assertion follows. If $  \mathcal{X}_{i}^{(k+1)} $ has at least two elements, say, $x, y$, then $y \overset{k+1}\sim x$. By Propostion~\ref{lump:proposition:local_symmetry_properties}, we must have $y \overset{k}\sim x$. Therefore, there exists a $j \in \setN{  M_k}$ such that $x,y \in   \mathcal{X}_{j}^{(k)}  $. Since the choice of $x,y$ is arbitrary, the assertion follows.
\end{proof}

For practical applications, one would start with $\mathcal{X}^N/ \overset{1}\sim$ and then iteratively obtain further refinements $\mathcal{X}^N/ \overset{2}\sim, \mathcal{X}^N/ \overset{3}\sim  $, and so on until satisfactory accuracy is achieved (assuming we can quantify accuracy for the time being). In the light of Proposition~\ref{lump:proposition:local_symmetry_properties}, two important remarks are in place. They emphasise the benefits of local symmetry-driven lumping over the automorphism-driven one.

\begin{myRemark}
In an algorithmic implementation, \autoref{lump:proposition:local_symmetry_properties:item2} in  Proposition~\ref{lump:proposition:local_symmetry_properties}  provides a stopping rule for an iterative procedure to obtain local symmetry-driven partitions. That is, we can stop at the first instance of no improvement (the equivalence classes of $\overset{k+1}\sim$ and $\overset{k}\sim$ are the same).
\end{myRemark}

\begin{myRemark}
The diameters in many random graphs grow  slowly as the number of vertices goes to infinity. For instance, the diameter of  Erd\"os-R\'enyi random graphs with $N$ vertices and edge probability $\lambda/N$, for some fixed $\lambda>1$, grows as $\log N$ \cite{riordan2010diameter}.
In the light of  \autoref{lump:proposition:local_symmetry_properties:item3} in  Proposition~\ref{lump:proposition:local_symmetry_properties}, our approach needs (at most) as many steps as the diameter of $G$ to produce an \emph{exactly} lumpable partition of $\mathcal{X}^N$. Note that  $k \geq   \mathrm{diam}(G)$ is only a sufficient condition for $\mathcal{X}^N/  \overset{k}\sim$ to be an exactly lumpable partition. For practical purposes, we may  achieve sufficient accuracy (including exact lumpability) even for small values of $k < \mathrm{diam}(G) $.
\end{myRemark}

Our local symmetry-driven lumping approach shares a close relationship with what are known as fibrations in algebraic graph theory. We briefly describe the relationship in the following.

\setcounter{equation}{0}

\section{Graph fibrations}
\label{lump:sec:fibrations}
Fibrations of graphs were first inspired by fibrations between a pair of categories \cite{boldi2002fibrations}. Although the idea of fibrations originated from category theory, it has deep implications for graph theory, theoretical computer science, and other mathematical disciplines. For instance, in \cite{boldi2006PageRank}, the authors discuss its interesting connections to PageRank citation ranking algorithm. The authors in \cite{NIJHOLT2016NetworkDynamics} explore the similarities between dynamical systems with a network structure and dynamical systems with symmetry by means of fibrations of graphs. 
Let us now define the necessary graph theoretic concepts.

\begin{figure}
  \centering
  \includegraphics[width=0.75\columnwidth]{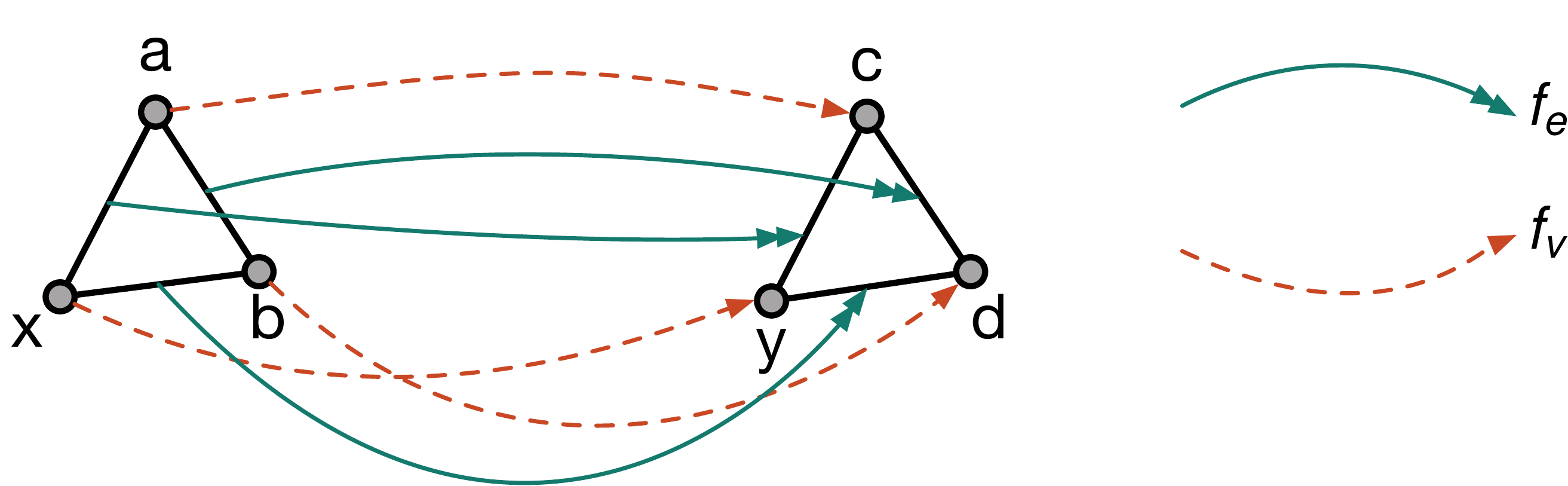}
  \caption{\label{lump:fig:fibration_localSymmetry}%
  Fibrations map   vertices to vertices and edges to edges. When three vertices form a traingle, fibrations also preserve the  triangle structure. Therefore, one can define an isomorphism between local neighbourhoods using fibrations.}
\end{figure}

Given the graph $G=(V,E)$, we first define the source and target maps $\mathsf{s}_G, \mathsf{t}_G : E \longrightarrow V$ on $G$ such that $\mathsf{s}_G (u,v) =u$ and $\mathsf{t}_G(u,v)=v$ for each $(u,v)\in E$.  Let $H =(V', E')$ be another graph. The source and the target maps $\mathsf{s}_H, \mathsf{t}_H$ are defined analogously. A map $f \defeq (f_v, f_e)$, where $f_v: V \longrightarrow V'$ and $f_e : E \longrightarrow E'$, is called a \emph{graph morphism} between $G$ and $H$ (from $G$ to $H$, to be precise) if $f_v$ and $f_e$ commute with the source and the target maps of $G$ and $H$, \ie, if $\mathsf{s}_H f_e = f_v \mathsf{s}_G  $ and $ \mathsf{t}_H f_e = f_v \mathsf{t}_G $. A morphism is called an \emph{epimorphism} if both $f_v$ and $f_e$ are surjective. Finally, we define a graph fibration as follows \cite{boldi2002fibrations}:

\begin{myDefinition}
\label{lump:defn:fibrations}
A morphism $f \defeq (f_v, f_e)$ between two graphs $G=(V,E)$ and $H=(V',E')$   is called a {fibration} between graphs $G$ and $H$  (from $G$ to $H$, to be precise) if, for each edge $a\in E'$ and for each $x \in V$ satisfying $f_v(x)= \mathsf{t}_H (a)$, there exists  a unique edge $a_x \in E$ such that $f_e(a_x)=a   $ and $\mathsf{t}_G(a_x)=x$. The edge $a_x$ thus found is called the lifting of $a$ at $x$, and is denoted by $f_e^{-1}(a)$. The graph $G$ is then called {fibred over} $H$. The fibre over a vertex $y \in V'$, denoted by $\fibre{y}$,  is the set of vertices in $V$ that are mapped to $y$, \ie, $\fibre{y} \defeq \{ x\in V \mid f_v(x)=y \} $.
\end{myDefinition}

%
%

In the original paper \cite{boldi2002fibrations}, the authors define colour preserving graph morphisms when graphs are endowed with a colouring function. In that case, $f_e$ also commutes with the colouring function. For our present purposes, we do not require  this generality and only consider uncoloured graphs. In \cite{boldi2002fibrations}, the authors showed that a left action of a group on $G$ can be used to induce fibrations. They also show that fibrations and epimorphisms satisfying certain local in-isopmorphism property are equivalent  \cite[Theorem 2]{boldi2002fibrations}.    Indeed, fibrations have a close relationship with the notion of local symmetry described in \autoref{lump:sec:localSymmetry}.  The proof of the following proposition follows analogously from  \cite[Theorem 2]{boldi2002fibrations}. However, for the sake of completeness, we also provide it in \autoref{lump:sec:appendixA}.

\begin{myProposition}\label{lump:proposition:fibration_1}
Let $f\defeq (f_v,f_e)$ be a fibration of the graph $G=(V,E)$, \ie, a fibration from $G$ to $G$ itself. Pick two vertices $x,y \in V$. If $x \in \fibre{y}$,  the vertices $x,y$ are locally symmetric,\ie, $x \overset{1} \sim y $. Moreover, if the vertices $x,y$ are locally symmetric, there exists a fibration such that $ x \in \fibre{y}$.
\end{myProposition}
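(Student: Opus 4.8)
The plan is to prove both directions of the equivalence by an explicit construction in each case. For the forward direction, assume $x \in \fibre{y}$ for some fibration $f = (f_v, f_e)$ of $G$. I want to produce an isomorphism $\varphi$ between $G[N_1(x)]$ and $G[N_1(y)]$ with $\varphi(x) = y$. The key structural fact about fibrations (Definition~\ref{lump:defn:fibrations}) is the \emph{unique lifting property}: for every edge $a$ incident to $y$ as target, and every vertex in the fibre of $\mathsf{t}_H(a)$ — in particular for $x$ itself — there is a unique edge $a_x$ incident to $x$ as target with $f_e(a_x) = a$. Since $G$ is undirected (each undirected edge appears as an ordered pair both ways), this says: the edges at $y$ are in bijection, via lifting, with the edges at $x$. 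I would define $\varphi$ on $N_1(x)$ by sending $x \mapsto y$ and, for each neighbour $w$ of $x$, sending $w \mapsto$ the other endpoint of $f_e(\{w,x\})$ in $G$, i.e. $\varphi(w) \defeq \mathsf{s}_G(f_e^{-1}\text{-partner})$. The unique lifting property makes this a bijection between $N_1(x)$ and $N_1(y)$. I then need to check it preserves adjacency on the induced subgraphs: edges between $x$ and its neighbours are preserved by construction (they map to the corresponding edges at $y$), and an edge between two neighbours $w, w'$ of $x$ must map under $f_e$ to an edge between $\varphi(w)$ and $\varphi(w')$ because $f_e$ commutes with source and target maps. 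This is where Figure~\ref{lump:fig:fibration_localSymmetry} is doing its work — the triangle $\{x, w, w'\}$ is carried to the triangle $\{y, \varphi(w), \varphi(w')\}$.

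\textbf{For the converse}, assume $x \overset{1}\sim y$, witnessed by an isomorphism $\psi : G[N_1(x)] \to G[N_1(y)]$ with $\psi(x) = y$. I need to build a fibration $f$ of $G$ with $x \in \fibre{y}$. The natural candidate is to exploit the group-action construction mentioned in the excerpt (\cite{boldi2002fibrations} show a left group action on $G$ induces a fibration): take the group $\Psi_1(G)$ of local symmetries acting on $V$, and let $f_v$ be the quotient map sending each vertex to (a chosen representative of) its orbit under $\overset{1}\sim$, with $f_e$ the induced map on edges. Since $x \overset{1}\sim y$ there is an element of $\Psi_1(G)$ taking $x$ to $y$, so $x$ and $y$ lie in the same fibre; after identifying the fibre with its representative one gets $x \in \fibre{y}$. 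I would then invoke the cited equivalence (\cite[Theorem 2]{boldi2002fibrations}) — or verify directly — that the quotient-by-local-symmetry map satisfies the unique lifting property, using that local symmetries are \emph{equitable} partitions of $V$ (noted just after Proposition~\ref{lump:proposition:local_symmetry_properties}): equitability says exactly that each vertex in a class has the same number of neighbours in every other class, which is the combinatorial heart of unique lifting.

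\textbf{The main obstacle} I anticipate is the converse direction, specifically the passage from a \emph{single} local isomorphism $\psi$ at the pair $(x,y)$ to a globally coherent fibration. A single $1$-local isomorphism need not extend to a genuine permutation in $\Psi_1(G)$, nor need the collection of all such pairwise isomorphisms glue consistently; one really needs the equitable-partition structure (or an appeal to $\overset{1}\sim$ being an equivalence relation with the right quotient behaviour) to manufacture a fibration rather than just a local map. I would handle this by \emph{not} trying to extend $\psi$ directly, but instead using the existence of $\psi$ only to conclude $x \overset{1}\sim y$ and then building the fibration from the equivalence relation as above; the technical content then reduces to checking that the quotient map $V \to V/\overset{1}\sim$ (and its edge version) is a fibration, which is routine given equitability. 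The forward direction is comparatively mechanical — it is a direct unwinding of the unique lifting property — and I expect it to take only a few lines once the notation for lifts is fixed.
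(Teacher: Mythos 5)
Your proposal is correct and follows essentially the same route as the paper: the forward direction uses the unique lifting property to biject $N_1(x)$ with $N_1(y)$ and the morphism (source/target-commuting) property to carry triangles $\{x,w,w'\}$ to triangles $\{y,\varphi(w),\varphi(w')\}$, exactly as in the paper's construction $g(a)=\mathsf{s}_G f_e^{-1}(a,x)$; the converse likewise builds the fibration from the whole equivalence relation $\overset{1}\sim$ (via a choice of class representatives and elements of $\Psi_1(G)$) rather than from the single local isomorphism, deferring the verification of the lifting property to \cite[Theorem 2]{boldi2002fibrations} just as the paper does.
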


The above proposition  essentially shows that the equivalence classes of local symmetry (with $k=1$) and fibres induced by a graph fibration are the same. Therefore, the fibres can also be used to aggregate the states of $\mathcal{X}^N$ to achieve approximate lumpability in the same fashion as we did with local symmetry.

\setcounter{equation}{0}
\section{Approximation error}
\label{lump:sec:approximation_error}
As the lumping based on local symmetry does not ensure Markovianness of the lumped process, we need to quantify the approximation error. In order to do so, we work with the uniformization of $X$.    Then we lump  $\uniformization{X}$ to produce $\aggregation{\uniformization{X}}$  according to $k$-local symmetry.   A direct assessment of the quality of aggregation is cumbersome. Therefore, it is suggested \cite{Geiger2015OptimalKL,Deng2011KL} that we \emph{lift} the aggregated process $\aggregation{\uniformization{X}}$ to a Markov chain on the same state space $\mathcal{X}^N$ as $ \uniformization{X}$ and then compare their transition probability matrices. The lifting allows us to use known metrics of divergence such as the Kullback-Leibler  divergence to quantify the approximation error. We follow the scheme depicted in \autoref{lump:fig:lifting}.

%
%

%

In order to fix ideas, let us lump $ \uniformization{X}$ according to $k$-local symmetry, \ie, according to the partition $\{ \mathcal{X}_{1}^{(k)} , \mathcal{X}_{2}^{(k)} ,\ldots,  \mathcal{X}_{M_k}^{(k)} \}  $ of $\mathcal{X}^N$ obtained as the equivalence classes of $\overset{k}\sim$. We introduce two notations in this connection. Let $\eta_k : \mathcal{X}^N \longrightarrow \setN{M_k}  $
be the partition function associated with $\overset{k}\sim$, \ie, $\eta_k(x)\defeq i \iff x \in  \mathcal{X}_{i}^{(k)}  $. For $u\in \mathcal{X}^N$, let us denote the equivalence class containing $u$ by $\coset{x}{k}$, \ie, $\coset{x}{k}\defeq  \mathcal{X}_{i}^{(k)} \iff x \in  \mathcal{X}_{i}^{(k)}  $. Note that, $\coset{x}{k} = \eta_k^{-1}(\eta_k (x))   $.

Let $T =((t_{i,j}))$ be the transition probability matrix associated with $ \uniformization{X}$. Now, since $X$ is not necessarily lumpable with respect to the partition $\{ \mathcal{X}_{1}^{(k)} , \mathcal{X}_{2}^{(k)} ,\ldots,  \mathcal{X}_{M_k}^{(k)} \}  $, for $i \neq j \in \setN{M_k} $ and two distinct $x,y \in \mathcal{X}_{i}^{(k)} $, the quantity $\sum_{z \in \mathcal{X}_{j}^{(k)}} t_{x,z}$ may not equal $\sum_{z \in \mathcal{X}_{j}^{(k)}} t_{y,z}$. If $\uniformization{X}$ is stationary with distribution $\pi$,  \ie, if $\pi$ is the solution to $\pi T= \pi$ and $p(0)=\pi$,  a natural estimate of the transition probability of the lumped process is the following
\begin{align}
\label{lump:eq:lumpedTransitionProb}
   \tilde{t}_{i,j}^{(k)} \defeq \frac{  \sum_{u \in  \mathcal{X}_{i}^{(k)}     }  \pi_u   \sum_{v \in  \mathcal{X}_{j}^{(k)} }  t_{u,v}  }{ \sum_{u \in  \mathcal{X}_{i}^{(k)} }   \pi_u  } \, \eqcomma \text{ for } i, j \in \setN{M_k} \eqstop
\end{align}
That is, we estimate the transition probabilities of the lumped process  $\aggregation{ \uniformization{X}}$    by averaging the different values $\sum_{z \in \mathcal{X}_{j}^{(k)}} t_{x,z}$ and $\sum_{z \in \mathcal{X}_{j}^{(k)}} t_{y,z}$, weighted by the stationary probabilities \cite{Geiger2015OptimalKL}. Let $\tilde{T}^{(k)} \defeq ((   \tilde{t}_{i,j}^{(k)}  ))$.  Now, we describe how the transition probabilities of the lifted Markov chain are calculated. There are two common ways of lifting $\aggregation{\uniformization{X}}$ to a Markov chain on $\mathcal{X}^N$; one using a probability vector, called $\pi$-lifting, and the other  using  the transition probabilities, called $P$-lifting.  Let us discuss $\pi$-lifting first.

%


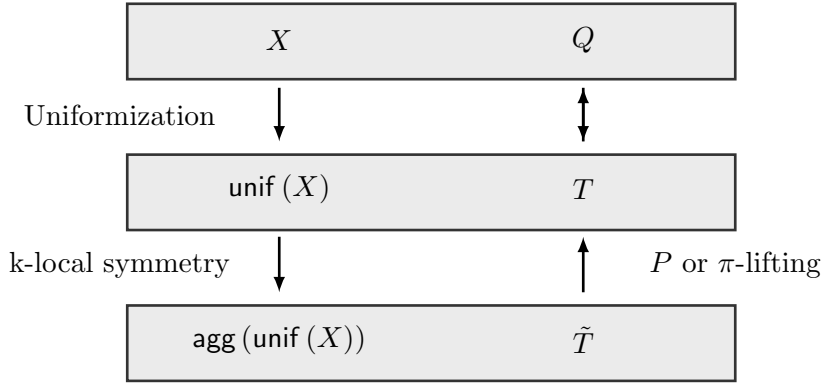
\begin{figure}
 \centering
 \normalfont
 \begin{tikzpicture} [inner sep=0cm, minimum size = 1.25cm]
   \draw[line width=1pt, draw=tud0d, fill=tud0a!40] (0,0) rectangle  +(8,1);
   \node(node1)[below] at (2,1.15){$X$};
   \node(node2)[below] at (6,1.15){$Q$};
   \draw[line width=1pt, draw=tud0d, fill=tud0a!40] (0,-2) rectangle  +(8,1);
   \node(node3)[below] at (2,-0.825){$\uniformization{X}$};
   \node(node4)[below] at (6,-0.825){$T$};
   \draw[line width=1pt, draw=tud0d, fill=tud0a!40] (0,-4) rectangle  +(8,1);
   \node(node5)[below] at (2,-2.825){$\aggregation{\uniformization{X}}$};
   \node(node6)[below] at (6,-2.825){$\tilde{T}$};
   \draw [-latex, line width=1pt] (node1) to node [minimum size=0.3cm, xshift=-2.1cm, yshift=0.0cm] {Uniformization} (node3);
   \draw [-latex, line width=1pt] (node3) to node [minimum size=0.3cm, xshift=-2.1cm, yshift=0.0cm] {k-local symmetry} (node5);
   \draw [-latex, line width=1pt] (node6) to node [minimum size=0.3cm, xshift=2.0cm, yshift=0.0cm] {$P$ or $\pi$-lifting} (node4);
   \draw [-latex, line width=1pt] (node4) to node [minimum size=0.3cm, xshift=2.0cm, yshift=0.0cm] {} (node2);
   \draw [-latex, line width=1pt] (node2) to node [minimum size=0.3cm, xshift=2.0cm, yshift=0.0cm] {} (node4);
 \end{tikzpicture}
 \caption{\label{lump:fig:lifting} %
 Lifting procedure used to assess the quality of the approximation.}
\end{figure}

\begin{myDefinition}[$\pi$-lifting]
	The $\pi$-lifting of $\eta_k(\uniformization{X}  )$ is a DTMC with transition probability matrix $T^{\pi}_{k} \defeq ((t_{u,v}^{\pi,k}    ))$ where
	\begin{align}
	t_{u,v}^{\pi,k}  \defeq \frac{   \pi_v  }{   \sum_{  x \in \coset{v}{k}  }  \pi_x   }  \tilde{t}_{ \eta_k(u)    ,\eta_k (v)}^{(k)} \, \eqcomma \text{ where } u, v \in \mathcal{X}^N \eqstop
	\label{lump:eq:pi_lifting}
	\end{align}
  \label{lump:Lump:defn:pi_lifting}
\end{myDefinition}

Please note that, in principle,  $\pi$-lifting can be done using any probability vector as long as the denominator remains non-zero for the choice of the candidate probability vector.  Nevertheless, the most common choice is the stationary probability vector. The reason for this choice is the fact that the stationary probability vector achieves the minimum KL divergence rate \cite{Deng2011KL}. For this reason, we consider $\pi$-lifting with the stationary distribution for numerical computations in this paper. Another immediate consequence of $\pi$-lifting is that the lifted Markov chain with transition probability matrix $T_k^{\pi}$ given in Definition~\ref{lump:Lump:defn:pi_lifting} is lumpable with respect to the partition $\mathcal{X}^N/\overset{k}\sim$ and has $\pi$ as the stationary probability. Now, we define the approximation error.

\begin{myDefinition}
We define the approximation error to be the KL divergence rate  between $\uniformization{X}$ and 
the lifted DTMCs. Therefore, for  $\pi$-lifting, the approximation error is given by
\begin{align}
\KL{T}{ T^{\pi}_{k}  } \defeq \sum_{u  \in \mathcal{X}^N  } \sum_{v \in \mathcal{X}^N }  \pi_u t_{u,v} \log \left(  \frac{ t_{u,v}  }{   t_{u,v}^{\pi,k}    }  \right) \eqstop
\end{align}
\end{myDefinition}
Having defined the approximation error, we show that it indeed decreases monotonically with the order of local symmetry. This is precisely the assertion of Theorem~\ref{lump:theorem:KL_monotonicity}. However, in order to prove Theorem~\ref{lump:theorem:KL_monotonicity}, we need to make use of the  following calculation, which we present in the form of a  lemma.

\begin{myLemma}
  For any two states $u,v \in \mathcal{X}^N$, and for any $k$, define the ratio
  \begin{align}
\rho_k (u,v) \defeq   \frac{ \sum_{t \in \coset{v}{k}  }    \pi_t      }{ \tilde{t}_{\eta_k(u), \eta_k(v)    }^{(k)}      }   = \frac{    \sum_{p \in \coset{u}{k}   }  \pi_p \sum_{q \in \coset{v}{k}  }    \pi_q     }{     \sum_{p \in \coset{u}{k}   }  \sum_{q \in \coset{v}{k}  }    \pi_p t_{p,q}  }  \eqstop
  \end{align}
  Then, the following recursion relation holds true:
\begin{align}
 \sum_{x \in \coset{u}{k}   }  \sum_{y \in \coset{v}{k}   }  \pi_x t_{x,y}  \rho_{k+1}(x,y)  ={} & \rho_k(u,v) \sum_{x \in \coset{u}{k}   }  \sum_{y \in \coset{v}{k}   } \pi_x t_{x,y}   \eqstop
\end{align}
  \label{lump:lemma:k_average}
\end{myLemma}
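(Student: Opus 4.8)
The plan is to compute both sides of the claimed recursion by expanding the definitions of $\rho_{k+1}$ and $\rho_k$, and to exploit the fact that the partition $\mathcal{X}^N/\overset{k+1}\sim$ refines $\mathcal{X}^N/\overset{k}\sim$ (Proposition~\ref{lump:proposition:refinement_local_symmetry}). The key structural observation is that each $k$-class $\coset{u}{k}$ decomposes as a disjoint union of $(k+1)$-classes, say $\coset{u}{k} = \bigsqcup_{a} \mathcal{X}_a^{(k+1)}$, and likewise $\coset{v}{k} = \bigsqcup_{b} \mathcal{X}_b^{(k+1)}$. So the outer double sum over $x \in \coset{u}{k}$, $y \in \coset{v}{k}$ splits as a double sum over pairs of $(k+1)$-classes $(a,b)$, followed by sums over $x \in \mathcal{X}_a^{(k+1)}$, $y \in \mathcal{X}_b^{(k+1)}$. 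Crucially, $\rho_{k+1}(x,y)$ is constant on each such block $(a,b)$: by definition it depends on $x,y$ only through $\coset{x}{k+1} = \mathcal{X}_a^{(k+1)}$ and $\coset{y}{k+1} = \mathcal{X}_b^{(k+1)}$.

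First I would write, for a fixed pair of $(k+1)$-subclasses $\mathcal{X}_a^{(k+1)} \subseteq \coset{u}{k}$ and $\mathcal{X}_b^{(k+1)} \subseteq \coset{v}{k}$,
\begin{align*}
\sum_{x \in \mathcal{X}_a^{(k+1)}}\sum_{y \in \mathcal{X}_b^{(k+1)}} \pi_x t_{x,y}\,\rho_{k+1}(x,y)
={} & \rho_{k+1}\big(\mathcal{X}_a^{(k+1)},\mathcal{X}_b^{(k+1)}\big) \sum_{x \in \mathcal{X}_a^{(k+1)}}\sum_{y \in \mathcal{X}_b^{(k+1)}} \pi_x t_{x,y} \\
={} & \Big(\sum_{p \in \mathcal{X}_a^{(k+1)}}\pi_p\Big)\Big(\sum_{q \in \mathcal{X}_b^{(k+1)}}\pi_q\Big),
\end{align*}
where the last equality is just the definition $\rho_{k+1} = \big(\sum \pi_p \sum \pi_q\big)\big/\big(\sum\sum \pi_p t_{p,q}\big)$ with the denominator cancelling against the remaining double sum. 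Summing this identity over all $(k+1)$-subclasses $a$ of $\coset{u}{k}$ and $b$ of $\coset{v}{k}$, the right-hand side telescopes via disjointness into $\big(\sum_{p \in \coset{u}{k}}\pi_p\big)\big(\sum_{q \in \coset{v}{k}}\pi_q\big)$, which by the definition of $\rho_k$ equals $\rho_k(u,v)\sum_{x \in \coset{u}{k}}\sum_{y \in \coset{v}{k}}\pi_x t_{x,y}$. That is exactly the claimed recursion, so I would be done.

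The main thing to be careful about — the ``obstacle,'' though it is more of a bookkeeping point than a genuine difficulty — is the possibility of vanishing denominators: if $\sum_{p \in \mathcal{X}_a^{(k+1)}}\sum_{q \in \mathcal{X}_b^{(k+1)}}\pi_p t_{p,q} = 0$ then $\rho_{k+1}$ on that block is not defined. I would handle this by noting that in that case $\pi_p t_{p,q} = 0$ for all $p \in \mathcal{X}_a^{(k+1)}$, $q \in \mathcal{X}_b^{(k+1)}$ (all terms are non-negative), so both the contribution to the left-hand sum and the product $\big(\sum\pi_p\big)\big(\sum\pi_q\big)$ term on the right can be consistently taken to be zero (or the convention $0\cdot\rho = 0$ adopted), and such blocks simply drop out of both sides. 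The only other point worth stating explicitly is the invariance of $\rho_{k+1}(x,y)$ across a block, which follows immediately because $\rho_{k+1}$ is a function of $(\eta_{k+1}(x),\eta_{k+1}(y))$ alone; everything else is a direct substitution of definitions together with the refinement property.
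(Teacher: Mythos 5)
Your proof is correct and follows essentially the same route as the paper's: decompose each $k$-class into its constituent $(k+1)$-classes, use the constancy of $\rho_{k+1}$ on each block to cancel its denominator against the block sum of $\pi_x t_{x,y}$, and recombine the resulting products of $\pi$-masses into $\rho_k(u,v)$ times the full double sum. Your explicit treatment of blocks with vanishing denominator is a sensible extra precaution that the paper's proof passes over silently.
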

\begin{proof}[Proof of Lemma~\ref{lump:lemma:k_average}]
  By the refinement property of local symmetry  in   Proposition~\ref{lump:proposition:refinement_local_symmetry}, we can find distinct   integers $i_1,i_2,\ldots, i_m$ and $j_1,j_2, \ldots, j_n$ in $\setN{M_{k+1}}$ such that 
  \begin{align}
  	\label{lump:eq:u_k_partition}
    \coset{u}{k} =\cup_{ l \in \setN{m}  }    \mathcal{X}_{ i_l  }^{(k+1)} \text{ and }   \coset{v}{k} =\cup_{ l \in \setN{n}  }    \mathcal{X}_{ j_l  }^{(k+1)} \eqstop
  \end{align}
Therefore, we can split the summation over $  \coset{u}{k}$, $  \coset{v}{k}$ into disjoint equivalence classes of $\overset{k+1}\sim$. Within each of these equivalence classes of $\overset{k+1}\sim$, the quantity $\tilde{t}_{ \eta_{k+1}(x), \eta_{k+1}(y)  }$ is constant, and therefore, can be pulled out of the summation.  Therefore, 
\begin{align*}
  &\sum_{x \in \coset{u}{k}   }  \sum_{y \in \coset{v}{k}   }  \pi_x t_{x,y}  \rho_{k+1}(x,y) \\
    ={} & \sum_{p \in \setN{m}}  \sum_{q \in \setN{n}}   \sum_{x \in   \mathcal{X}_{ i_p  }^{(k+1)}   }  \sum_{y \in   \mathcal{X}_{ j_q  }^{(k+1)}   }  \pi_x t_{x,y} \left(    \frac{    \sum_{s \in \coset{x}{k+1}   }  \pi_s \sum_{t \in \coset{y}{k+1}  }    \pi_t      }{     \sum_{s \in \coset{x}{k+1}   }  \sum_{t \in \coset{y}{k+1}  }    \pi_s t_{s,t}  }     \right) \\
    ={} & \sum_{p \in \setN{m}}  \sum_{q \in \setN{n}}      \left(    \frac{    \sum_{s \in  \mathcal{X}_{ i_p  }^{(k+1)}  }  \pi_s \sum_{t \in  \mathcal{X}_{ j_q  }^{(k+1)}   }    \pi_t      }{     \sum_{s \in  \mathcal{X}_{ i_p  }^{(k+1)}  }  \sum_{t \in  \mathcal{X}_{ j_q  }^{(k+1)}  }    \pi_s t_{s,t}  }     \right)  \sum_{x \in   \mathcal{X}_{ i_p  }^{(k+1)}   }  \sum_{y \in   \mathcal{X}_{ j_q  }^{(k+1)}   }  \pi_x t_{x,y}\\
    ={} &  \sum_{x \in \coset{u}{k}   }  \sum_{y \in \coset{v}{k}   } \pi_x \pi_y
    ={} \rho_k(u,v)   \sum_{x \in \coset{u}{k}   }  \sum_{y \in \coset{v}{k}   } \pi_x t_{x,y}  \eqstop
\end{align*}
This completes the proof.
\end{proof}

Note that $\rho_k(u,v)=\rho_k(x,y)$ for any $u \overset{k}\sim x$ and $v \overset{k}\sim y$. Therefore, we can use the shorthand notation $\rho_k( \mathcal{X}_i^{(k)} ,  \mathcal{X}_j^{(k)}    ) $ to mean $\rho_k(u,v)$ for any $u \in \mathcal{X}_i^{(k)} ,  v \in \mathcal{X}_j^{(k)}  $.

\begin{myRemark}[Averaging argument]
The main implication of \autoref{lump:lemma:k_average} is that the quantity $\rho_k(u,v) $ can be seen as a weighted average of $\rho_{k+1} (x,y)$ where $x,y$'s are in the equivalence classes of $\overset{k+1}\sim$. The weights  are precisely 
\begin{align}
W_{ \coset{u}{k}, \coset{v}{k} } (  \mathcal{X}_{ i_p  }^{(k+1)} ,   \mathcal{X}_{ j_q  }^{(k+1)}   ) \defeq \frac{  \sum_{x \in   \mathcal{X}_{ i_p  }^{(k+1)}   }  \sum_{y \in   \mathcal{X}_{ j_q  }^{(k+1)}   }  \pi_x t_{x,y}  }{  \sum_{x \in \coset{u}{k}   }  \sum_{y \in \coset{v}{k}   } \pi_x t_{x,y}}  \eqcomma
\end{align}
where we have partitioned $\coset{u}{k}$ and $\coset{v}{k}$ into $ \mathcal{X}_{ i_p  }^{(k+1)} $'s and $ \mathcal{X}_{ j_q  }^{(k+1)} $'s respectively as shown in \autoref{lump:eq:u_k_partition}. We interpret  $W_{ \coset{u}{k}, \coset{v}{k} } (  \mathcal{X}_{ i_p  }^{(k+1)} ,   \mathcal{X}_{ j_q  }^{(k+1)}   ) $ as the weight for the cross-section $ \mathcal{X}_{ i_p  }^{(k+1)} \times  \mathcal{X}_{ j_q  }^{(k+1)}   $ with regards to the partition of $\coset{u}{k}$ and $\coset{y}{k}$ given in \autoref{lump:eq:u_k_partition}. Therefore, it follows from \autoref{lump:lemma:k_average} that
\begin{align}
\rho_k(\coset{u}{k} ,\coset{v}{k}) =  \sum_{p \in \setN{m}}  \sum_{q \in \setN{n}}   \rho_{k+1}(   \mathcal{X}_{ i_p  }^{(k+1)} ,   \mathcal{X}_{ j_q  }^{(k+1)}    ) W_{ \coset{u}{k}, \coset{v}{k} } (  \mathcal{X}_{ i_p  }^{(k+1)} ,   \mathcal{X}_{ j_q  }^{(k+1)}   )  \eqstop
\end{align}
Since the weights sum up to unity, $\rho_k(u,v) $ can be indeed seen as an average. Keeping this remark in mind, we now proceed to state and prove \autoref{lump:theorem:KL_monotonicity} about the monotonicity of the approximation error.
\label{lump:remark:averaging_argument}
\end{myRemark}

\begin{myTheorem}
  For $\pi$-lifting, the aggregation of states in $\mathcal{X}^N$ using  local symmetry ensures monotonically decreasing approximation error 
  with increasing order of local symmetry. That is,
  \begin{align}
    \KL{T}{T_{k+1}^{(\pi)} } \leq   \KL{T}{T_{k}^{(\pi)}} \, \text{ for all } k \geq 1 \eqstop
  \end{align}
\label{lump:theorem:KL_monotonicity}
\end{myTheorem}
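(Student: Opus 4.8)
The plan is to prove the inequality by examining the difference of the two KL divergence rates directly. Writing both rates out, the terms $\pi_u t_{u,v}\log t_{u,v}$ cancel, so
\begin{align}
  \KL{T}{T_k^{(\pi)}} - \KL{T}{T_{k+1}^{(\pi)}} = \sum_{u \in \mathcal{X}^N}\sum_{v \in \mathcal{X}^N} \pi_u\, t_{u,v}\,\log\frac{t_{u,v}^{\pi,k+1}}{t_{u,v}^{\pi,k}} \eqcomma
\end{align}
and it suffices to show the right-hand side is nonnegative. First I would simplify the ratio inside the logarithm. Substituting the expression $\tilde{t}^{(k)}_{\eta_k(u),\eta_k(v)} = \bigl(\sum_{t\in\coset{v}{k}}\pi_t\bigr)/\rho_k(u,v)$, which is just the first identity for $\rho_k$ recorded in Lemma~\ref{lump:lemma:k_average}, into Definition~\ref{lump:Lump:defn:pi_lifting}, one gets the compact form $t_{u,v}^{\pi,k} = \pi_v/\rho_k(u,v)$, and hence $t_{u,v}^{\pi,k+1}/t_{u,v}^{\pi,k} = \rho_k(u,v)/\rho_{k+1}(u,v)$ on every pair with $t_{u,v}>0$ (pairs with $t_{u,v}=0$ contribute nothing; on the support of $t$ the denominators defining the $\rho$'s are bounded below by $\pi_u t_{u,v}>0$, so all quantities stay finite and positive provided $\uniformization{X}$ is irreducible with full-support stationary vector $\pi$, which is needed anyway for the KL rates to be defined).

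Next I would group the double sum by the partition $\mathcal{X}^N/\overset{k}\sim$. For a fixed pair of classes $\mathcal{X}_i^{(k)},\mathcal{X}_j^{(k)}$ the value $\rho_k(u,v)$ is constant on $\mathcal{X}_i^{(k)}\times\mathcal{X}_j^{(k)}$, equal to $\rho_k(\mathcal{X}_i^{(k)},\mathcal{X}_j^{(k)})$; writing $S_{ij}\defeq\sum_{u\in\mathcal{X}_i^{(k)}}\sum_{v\in\mathcal{X}_j^{(k)}}\pi_u t_{u,v}\geq 0$, the $(i,j)$-block of the sum becomes
\begin{align}
  S_{ij}\left(\log\rho_k(\mathcal{X}_i^{(k)},\mathcal{X}_j^{(k)}) - \sum_{u\in\mathcal{X}_i^{(k)}}\sum_{v\in\mathcal{X}_j^{(k)}}\frac{\pi_u t_{u,v}}{S_{ij}}\,\log\rho_{k+1}(u,v)\right) \eqstop
\end{align}
The weights $\pi_u t_{u,v}/S_{ij}$ form a probability distribution on $\mathcal{X}_i^{(k)}\times\mathcal{X}_j^{(k)}$, so by concavity of the logarithm (Jensen's inequality) the subtracted sum is at most $\log\bigl(\sum_{u,v}(\pi_u t_{u,v}/S_{ij})\,\rho_{k+1}(u,v)\bigr)$. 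By Lemma~\ref{lump:lemma:k_average} together with the averaging argument of Remark~\ref{lump:remark:averaging_argument}, this weighted average of $\rho_{k+1}$ over the cross-section $\mathcal{X}_i^{(k)}\times\mathcal{X}_j^{(k)}$ equals precisely $\rho_k(\mathcal{X}_i^{(k)},\mathcal{X}_j^{(k)})$. Hence the parenthesised quantity is $\geq 0$, each block contributes a nonnegative amount, and summing over all $i,j\in\setN{M_k}$ yields $\KL{T}{T_k^{(\pi)}}-\KL{T}{T_{k+1}^{(\pi)}}\geq 0$, which is the claim.

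The step I expect to be the main obstacle is essentially organisational: making sure the probability weights produced by the cancellation in the first display coincide exactly with the weights $W_{\coset{u}{k},\coset{v}{k}}$ featured in Lemma~\ref{lump:lemma:k_average}, so that the Jensen bound and the averaging identity line up on the nose and nothing is lost. A smaller point requiring care is verifying that $t_{u,v}^{\pi,k}>0$ whenever $t_{u,v}>0$ (which follows from $\tilde{t}^{(k)}_{\eta_k(u),\eta_k(v)}\geq \pi_u t_{u,v}/\sum_{p\in\coset{u}{k}}\pi_p>0$ and $\pi_v>0$), so that the KL divergence rates are finite and every logarithm above is legitimate.
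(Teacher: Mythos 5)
Your proposal is correct and follows essentially the same route as the paper's proof: rewrite the difference of KL rates as $\sum_{u,v}\pi_u t_{u,v}\log\bigl(\rho_k(u,v)/\rho_{k+1}(u,v)\bigr)$, group by the blocks of $\mathcal{X}^N/\overset{k}\sim$, and apply Jensen's inequality together with the averaging identity of Lemma~\ref{lump:lemma:k_average} and Remark~\ref{lump:remark:averaging_argument} to conclude each block contributes nonnegatively. Your extra care about pairs with $t_{u,v}=0$ and the positivity of the denominators is a welcome refinement but does not change the argument.
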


\begin{proof}[Proof of \autoref{lump:theorem:KL_monotonicity}]
	By the refinement property of local symmetry proved in   \autoref{lump:proposition:refinement_local_symmetry}, we can partition $\setN{M_{k+1}} =\{1,2,\ldots,M_{k+1}\} $ into $\{\Lambda_1, \Lambda_2, \ldots, \Lambda_{M_k}   \}$ such that
	\begin{align*}
 \mathcal{X}_{i}^{(k)} = \cup_{l \in \Lambda_i }  \mathcal{X}_{l  }^{k+1}  \eqstop
	\end{align*}
Note that
\begin{align*}
 &\KL{T}{T_{k}^{(\pi)}}-   \KL{T}{T_{k+1}^{(\pi)} } \\
 ={} &  \sum_{i, j \in  \setN{M_k} }   \sum_{u  \in \mathcal{X}_{i}^{(k)}  } \sum_{v \in \mathcal{X}_{j}^{(k)}  }  \pi_u t_{u,v}   \log \left(   \frac{  \rho_k(u,v)        }{      \rho_{k+1}(u,v)     }     \right)  \\
 ={} &  \sum_{i, j \in  \setN{M_k} } (  \log \left( \rho_k(\mathcal{X}_{i}^{(k)},\mathcal{X}_{j}^{(k)}  )   \right)      \sum_{u  \in \mathcal{X}_{i}^{(k)}  } \sum_{v \in \mathcal{X}_{j}^{(k)}  }  \pi_u t_{u,v}   -     \sum_{u  \in \mathcal{X}_{i}^{(k)}  } \sum_{v \in \mathcal{X}_{j}^{(k)}  }  \pi_u t_{u,v}    \log \left( \rho_{k+1}(u,v)   \right)     ) \\
 ={} &  \sum_{i, j \in  \setN{M_k} }  \Theta_{i,j} \eqcomma
\end{align*}
where
\begin{align*}
 \Theta_{i,j} \defeq {}&  \log \left( \rho_k(\mathcal{X}_{i}^{(k)},\mathcal{X}_{j}^{(k)}  )   \right)      \sum_{u  \in \mathcal{X}_{i}^{(k)}  } \sum_{v \in \mathcal{X}_{j}^{(k)}  }  \pi_u t_{u,v} \\
 &{}- \sum_{p \in \Lambda_i}  \sum_{q \in \Lambda_j}  \sum_{u  \in \mathcal{X}_{p}^{(k+1)}  } \sum_{v \in \mathcal{X}_{q}^{(k+1)}  }  \pi_u t_{u,v}   \log \left( \rho_{k+1}(u,v)   \right)  \\
 ={} & ( \sum_{u  \in \mathcal{X}_{i}^{(k)}  } \sum_{v \in \mathcal{X}_{j}^{(k)}  }  \pi_u t_{u,v} ) \times  (  \log \left( \rho_k(\mathcal{X}_{i}^{(k)},\mathcal{X}_{j}^{(k)}  )   \right)  \\
 &{}  - \sum_{p \in \Lambda_i}  \sum_{q \in \Lambda_j}  W_{ \mathcal{X}_{i}^{(k)},\mathcal{X}_{j}^{(k)} }(\mathcal{X}_{p}^{(k+1)},\mathcal{X}_{q}^{(k+1)} )    \log \left( \rho_{k+1}(\mathcal{X}_{p}^{(k+1)},\mathcal{X}_{q}^{(k+1)}  )   \right)      ) \\
 \geq {} & 0 \eqcomma
\end{align*}
by Jensen's inequality and the averaging argument given in Remark~\ref{lump:remark:averaging_argument} and Lemma~\ref{lump:lemma:k_average}. 
This completes the proof.
\end{proof}


\begin{figure}[!th]
    \centering
    \includegraphics[width=\columnwidth]{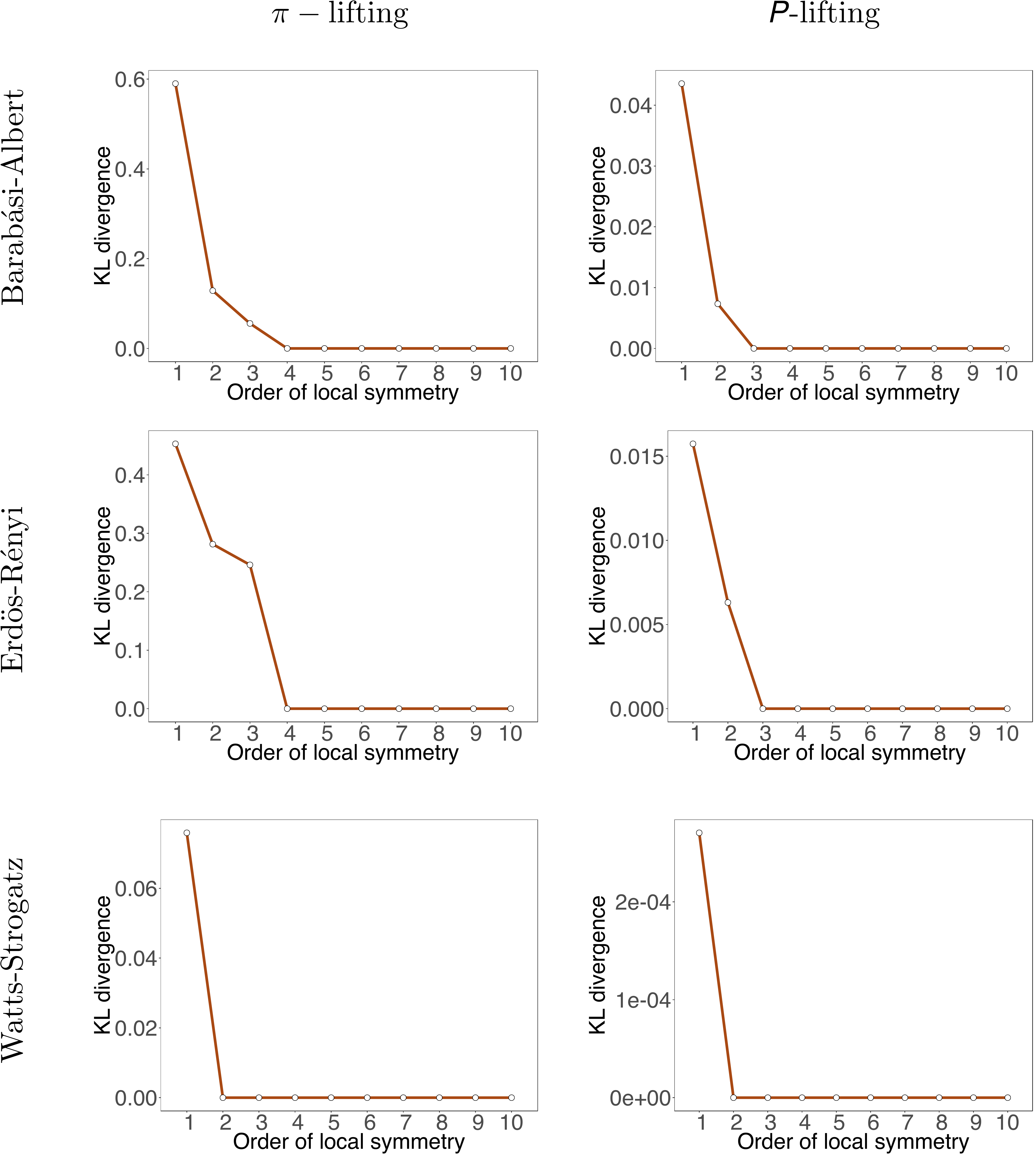}
    \caption[Monotonicity of KL divergence rate]{\label{lump:figure:KLDMonotonicity_Combined}%
      Monotonicity of the KL divergence with the order of the local symmetry for the SIS dynamics on different models of random graphs with $10$ vertices.  All graphs are undirected. The Erd\"os-R\'enyi graphs are created with edge probability $0.3$, while the Watts-Strogatz small world networks are created with rewiring probability~$0.3$ and each vertex having three neighbours. The infection and the recovery rates are both~$0.5$.}
\end{figure}

\begin{figure}[!th]
    \centering
    \includegraphics[width=\columnwidth]{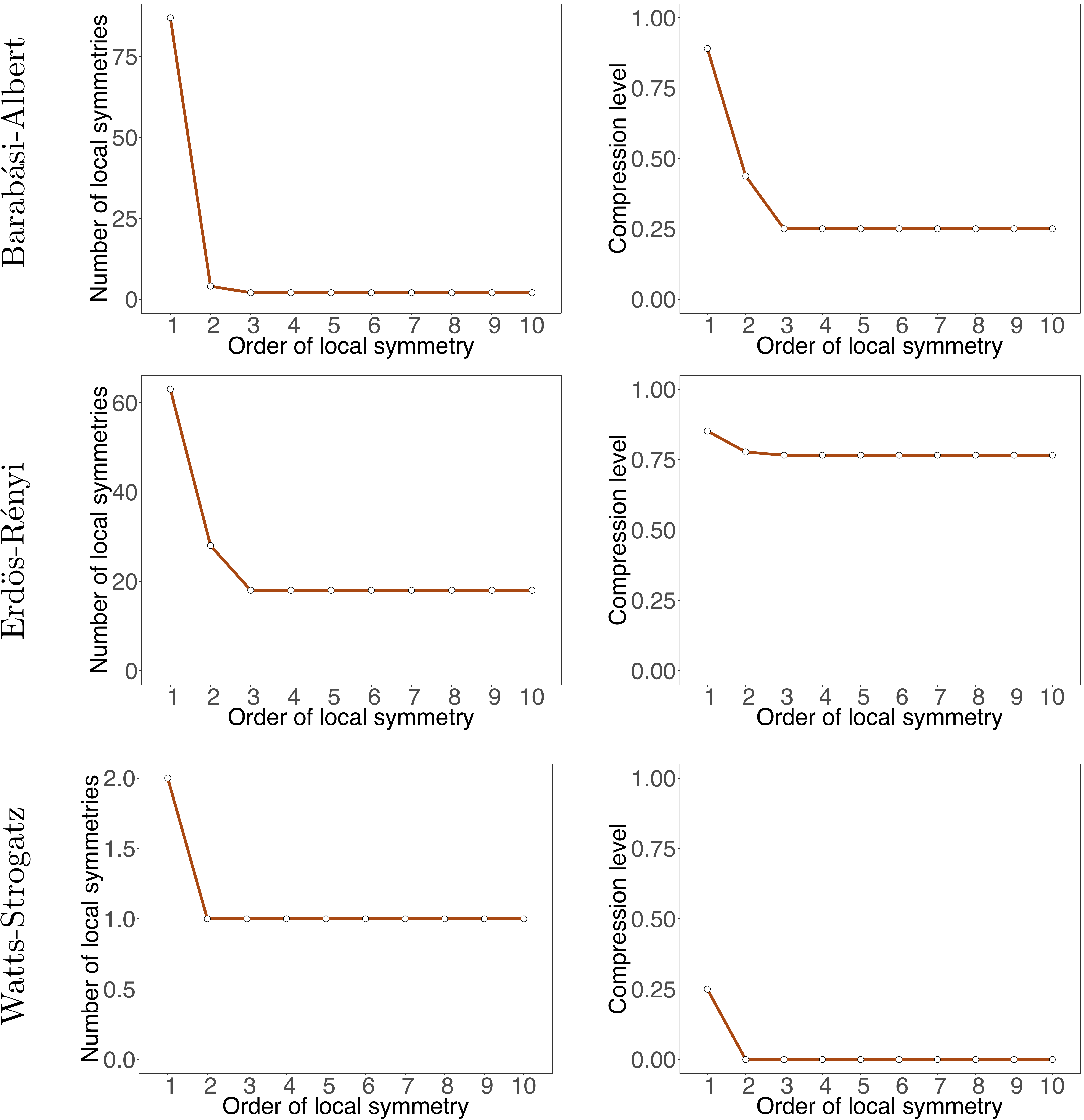}
    \caption[Compression level for local symmetry-based lumping]{\label{lump:lump:figure:Compression}%
      As we increase the order, the number of local symmetries (the cardinality of $\Psi_k$) decreases drastically. Therefore, the compression level  also decreases. The simulation set-up is the same as in \autoref{lump:figure:KLDMonotonicity_Combined}. 
      }
\end{figure}

Note that $\KL{T}{T_{k}^{(\pi)}}-   \KL{T}{T_{k+1}^{(\pi)} }=0$ is achieved if (and only if) equality is achieved in Jensen's inequality forcing the individual $\Theta_{i,j}$'s to be zeros. This is the case when the $\rho_k$ and $\rho_{k+1}$'s are equal. There are two possibilities. First, the equivalence classes of $\overset{k}{\sim}$ and $\overset{k+1}{\sim}$ are the same. In this case, by \autoref{lump:proposition:local_symmetry_properties}, the equivalence classes of all  $\overset{k+j}{\sim}$, for $j\geq 2$, will remain the same. Therefore, we have already reached automorphism, and hence, exact lumpability.   Second, the equivalence classes of $\overset{k}{\sim}$ and $\overset{k+1}{\sim}$ are different (so, we are not yet at automorphism), but exact  lumpability has already been achieved at order of local symmetry~$k$.  In both cases, we need not refine our partition further because exact lumpability has been achieved. Therefore,  $\KL{T}{T_{k}^{(\pi)}}-   \KL{T}{T_{k+1}^{(\pi)} }=0$ serves as a definite stopping rule for any iterative algorithmic implementation of local symmetry-driven lumping.

Now, we discuss the second type of lifting, which makes of the transition probabilities and is called $P$-lifting. The following is the definition.

\begin{myDefinition}[$P$-lifting]
	The $P$-lifting of $\eta_k(\uniformization{X}  )$ is a DTMC with transition probability matrix $T^{P}_{k} \defeq ((t_{u,v}^{P,k}    ))$ where, for $u, v \in \mathcal{X}^N$,
	\begin{align}
	t_{u,v}^{P,k}  \defeq \begin{cases}
	\frac{   t_{u,v}  }{   \sum_{  x \in \coset{v}{k}  }    t_{u,x}   }  \tilde{t}_{ \eta_k(u)    ,\eta_k (v)}^{(k)} \,  \text{ if   }     \sum_{  x \in \coset{v}{k}  }    t_{u,x} >0 \eqcomma \\
	\frac{1 }{  \cardinality{  \coset{v}{k}   }     }  \tilde{t}_{ \eta_k(u)    ,\eta_k (v)}^{(k)} \,    \text{ if }    \sum_{  x \in \coset{v}{k}  }    t_{u,x} =0 \eqstop
	\end{cases}
	\label{lump:eq:P_lifting}
	\end{align}
\end{myDefinition}

The approximation error for $P$-lifting is defined similarly. Note that $P$-lifting is sharp, in the sense that if the lumping is in fact exact, then $\KL{   T }{   T_{k}^{(P)} } =0$, whereas $\pi$-lifting is not \cite{Geiger2015OptimalKL}.  
In \autoref{lump:figure:KLDMonotonicity_Combined}, we show numerical results pertaining to Theorem~\ref{lump:theorem:KL_monotonicity}. We consider the Barab\'asi-Albert preferential attachment, the Erd\"os-R\'enyi and the Watts-Strogatz small-world random graphs. The claimed monotonicity is observed in all three cases. In fact, the KL divergence rate steeply decreases in all three cases, for both $\pi$ as well as $P$-lifting. The figures are particularly encouraging in that satisfactory level of accuracy is achieved even for small orders of local symmetry. Since one of the main purposes of aggregation is to engender state space reduction, we need to evaluate the performance of  local symmetry-driven aggregation in terms of some notion of compression level as well. Therefore, we define compression level ${C}$ at order of local symmetry~$k$ as follows:
\begin{align}
{C}(k)=1- \frac{M_k}{\cardinality{ \mathcal{X}^N   }  } \eqcomma
\end{align}
where $M_k$ is the cardinality of the quotient space $ \mathcal{X}^N/ \overset{k}\sim $, \ie, the number of equivalence classes of $\overset{k}\sim$. If there is no non-trivial local symmetries, the compression level is zero because the partition is simply $\{ \{x\} \mid x \in \mathcal{X}^N  \}$. In \autoref{lump:lump:figure:Compression}, we show how the number of local symmetries decreases drastically as we increase the order of local symmetry. Consequently, the compression level also falls steeply. This is expected because random graphs tend to become asymmetric as the number of vertices increases.

\begin{myRemark}
	Please note that \autoref{lump:theorem:KL_monotonicity} holds true for a general Markov chain whenever the partition function $\eta_{k+1}$ is a refinement of $\eta_k$. The fact that the partition functions $\eta_k,\eta_{k+1}$ are associated with the equivalence relations generated by $k$ and $k+1$-local symmetries is only sufficient for the validity of Theorem~\ref{lump:theorem:KL_monotonicity}, but not necessary. In fact, similar monotonicity can be proved, in similar fashion, even when $\eta_k,\eta_{k+1}$ are arbitrary partition functions defined on the state-space of a Markov chain such that $\eta_{k+1}$ is a refinement of $\eta_k$. Notably, such monotonicity can only be guaranteed for $\pi$-lifting. In \autoref{lump:sec:appendixA}, we provide a counterexample to establish that such monotonicity   fails for $P$-lifting when  arbitrary partition functions (one being a refinement of the other) are considered. However, this observation is about a general Markov chain. 
	For our MABM, we  observe similar monotonicity for $P$-lifting using numerical computations, as shown in \autoref{lump:figure:KLDMonotonicity_Combined}, but  we can not guarantee monotonicity in general.
\end{myRemark}

\setcounter{equation}{0}

\section{Discussions}
\label{lump:sec:discussion}

The idea of using Markov chain lumpability for model reduction has been discussed in the literature for some years now. For instance, the authors in  \cite{Simon2011Exact,Simon2012MeanField,kiss2017mathematics} considered epidemiological scenarios, focussing mainly on binary dynamics.  More general Markovian agent-based models were considered in \cite{Banisch2016AMB}. Lumpability abstractions were applied to rule-based systems in \cite{FERET2012Lumpability} from a theoretical computer science perspective. While model reduction is one of the main purposes of  lumpability, it is not the only one. In a recent paper \cite{katehakis_smit_2012}, the authors identify a class of Markov chains, which they call successively lumpable and for which the stationary probabilities can be computed successively by computing stationary probabilities of a cleverly constructed sequence of Markov chains (typically on much smaller state spaces).

\paragraph{Coverings and colour refinements}
For undirected graphs, a  notion similar to our notion of local symmetry is called a \emph{covering}  \cite{Angluin80}. However, in general, finding coverings is computationally challenging \cite{kratochv1998complexity}. In our formulation,  undirected graphs are to be treated as directed graphs with an edge set $E$ satisfying $(i,j)\in E \iff (j,i)\in E$. The second notion that is similar to our approach is that of colour refinement \cite{BerkholzBonsmaGrohe13,ArvindKoblerRattanVerbitsky16}.  In order to draw analogy, we think of the local states as colours, \ie, we have a $K$-colouring of $G$, and require isomorphisms to be colour-preserving. The objective  is to devise a colouring method (given the initial colouring) that is \emph{stable} in that two vertices with the same colour have identically coloured neighbourhoods. Note that a colouring naturally induces an equivalence relation on $V$. With successive refinement of colouring, we can construct equitable partitions of $V$ in much the same way we did with local symmetry. The equitable partitions, in turn, can be used to yield approximately lumpable partitions of $\mathcal{X}^N$. Colour refinements are convenient and   are often used as a simple isomorphism check. However, a limitation of this approach is that colour refinements are insufficient to find local isomorphisms in certain graphs such as regular graphs. In general, a graph $G$ is said to be amenable to colour refinement if it is distinguishable from any other graph $H$ via colour refinement. A number of classes of graphs are known to be amenable \cite{ArvindKoblerRattanVerbitsky16}, \eg, unigraphs, trees. It is also known \cite{BabaiErdosSelkow80} that  Erd\"os-R\'enyi  random graphs are amenable with high probability.

\paragraph{Regular graphs}
Large regular graphs, in general,  can exhibit  different dynamics on them. Since the vertices have similar neighbourhoods, our local symmetry will not be able to distinguish between them. This may lead to  poor lumping. Increasing the order of local symmetry will avoid such issues.  A theoretical analysis of this special case of regular graphs  is planned for future work.

\paragraph{Computation of the stationary distribution}
Note that computation of the stationary distribution itself is  cumbersome for Markov chains on large state spaces. In many cases, the transition matrix is sparse, which makes available a host of numerical tools developed for sparse matrices. There are also numerical algorithms \cite{stewart2000numerical}, such as the  Courtois' method \cite{courtois2014decomposability} or the Takahasi's iterative aggregation-disaggregation method \cite{takahashi1975lumping}, for computing the stationary distribution. In general, the efficiency of the Takahashi's algorithm depends on a good initial clustering of states. In our case, the computation is facilitated by the fact that the initial  quotient space $\mathcal{X}^N/\overset{1}\sim$ is expectedly a better partition than a random one.  In a recent paper \cite{kuntz2017rigorous}, the authors derive bounds on the stationary distribution (and moments) based on mathematical programming. In particular, when the stationary distribution is unique, they provide computable error bounds. Sampling-based techniques can also be used for this purpose. For instance, in \cite{hemberg2008dominated}, the authors provide an algorithm that combine Gillespie's algorithm with the  Dominated Coupling From The Past (DCFTP) techniques to provide guaranteed sampling from the stationary distribution.

\paragraph{Markov chain enlargement}
An interesting concept closely related to aggregation  is Markov chain enlargement. There are many examples where enlargement of the state space of a Markov chain can be computationally beneficial in that it can significantly reduce the mixing time. See \cite{Chen:1999:LMC,apers2017lifting} for a discussion on how splitting up states of a Markov chain can speed up mixing. This has implications for the performance of statistical inference algorithms that rely on the mixing of some Markov chain, and also for optimisation algorithms such as the alternating direction method of multipliers (ADMM). In  \cite{francca2017markov}, the authors show that, for certain objective functions, the distributed ADMM algorithm can indeed be seen as a lifting of the gradient descent algorithm.

\paragraph{CTBNs and SANs}
The Markovian agent-based model that we consider in this paper belongs to a more general class of models known as interacting particle systems (IPS) in the probability literature. The tools developed in this paper are expected to find applications beyond what has been discussed here and are immediately applicable to many of the traditional IPS models arising from statistical physics, population biology and social sciences. Such models include contact processes, voter models, exclusion models.  The MABM model discussed in the present paper is also closely related to continuous time Bayesian networks (CTBNs) \cite{nodelman2002CTBN} and stochastic automata networks (SANs) \cite{buchholz2004kronecker}. To be specific, the local intensity functions defined in \autoref{lump:eq:localIntensityFunction} constitute the Conditional Intensity Matrix (CIM) in  \cite{nodelman2002CTBN}. These CIMs can be then combined into $Q$ by the ``amalgamation'' operation. Another approach that is popular in SAN literature is to give $Q$ a Kronecker representation \cite{buchholz2004kronecker}. We expect the present endeavour will benefit and bridge the gap between the different communities that make use of the agent-based models.

%
%



\setcounter{equation}{0}
\appendix

\section{}
\label{lump:sec:appendixA}

\begin{proof}[Proof of Proposition~\ref{lump:proposition:permutation}]
  It can be verified that $ \{\mathcal{\tilde{Y}}_1, \mathcal{\tilde{Y}}_2,\ldots, \mathcal{\tilde{Y}}_M  \} $ indeed forms a partition of $\mathcal{Y}$. Let us denote the transition rate matrix of $Z$ by $\tilde{A}=((\tilde{a}_{i,j}))$, where $  \tilde{a}_{i,j} = a_{f^{-1}(i), f^{-1}(j)   } $, and $f^{-1}$ is the inverse of $f$ in $\SymmetryGroup{ \mathcal{Y} } $. The proof will be complete if we show that the linear system $\dot{z}=z\tilde{A}$ is lumpable. 
  Pick $\mathcal{\tilde{Y}}_i$, and $ \mathcal{\tilde{Y}}_j$ for $i \neq j$, and let $u, v \in \mathcal{\tilde{Y}}_i$ be arbitrarily chosen.  See that $u \in \mathcal{\tilde{Y}}_i$  implies $f^{-1}(u) \in \mathcal{Y}_i $.  Then,
  \begin{align*}
  \sum_{ l \in \mathcal{\tilde{Y}}_j  } \tilde{a}_{u, l}    =   \sum_{ l \in  \mathcal{\tilde{Y}}_j } a_{ f^{-1}(u),f^{-1}(l)} =   \sum_{ l \in  \mathcal{X}_j } a_{ s,l}  =   \sum_{ l \in  \mathcal{X}_j } a_{ t,l}  = \sum_{ l \in  \mathcal{\tilde{Y}}_j } a_{ f^{-1}(v),f^{-1}(l)}  =   \sum_{ l \in \mathcal{\tilde{Y}}_j  } \tilde{a}_{v, l} \, \eqcomma
  \end{align*}
where  $s=f^{-1}(u), t=f^{-1}(v) \in \mathcal{X}_i$ and the equality for $s$ and $t$ 
holds by virtue of the lumpability of $Y$. This verifies the Dynkin's criterion for $ \dot{z}=z\tilde{A} $. 
\end{proof}

\begin{proof}[Proof of Proposition~\ref{lump:proposition:fibration_1}]
 Let us first assume $x \in \fibre{y}$. In order to prove the vertices $x,y$ are locally symmetric, we construct an isomorphism  $g : N_1(x) \longrightarrow N_1(y)$ between $G[N_1(x)]$ and $G[N_1(y)]$ as follows:
 \begin{align}
   g(a) \defeq \mathsf{s}_G f_e^{-1} (a,x), \quad \forall a \in N_1(x) \eqstop
 \end{align}
Indeed, $f_e^{-1} (a,x)$ is an edge in $G[N_1(y)]$, and therefore, $g(a) \in N_1(y)$. In order to check whether $g$ is indeed an isomorphism, pick two vertices $a,b \in N_1(x)$ such that $(a,b) \in E$. If $b=x$, the assertion follows straightforwardly. Therefore, we consider $b\neq x$. Then, $(a,b) \in E$ implies the vertices $a,b,$ and $x$ form a triangle (see \autoref{lump:fig:fibration_localSymmetry}).

Since $f$ is a fibration,  $(f_v, f_e^{-1})$ is also a morphism because $f_v$ and $f_e^{-1}$ also commute with the source and target maps of $G$, \ie, $\mathsf{s}_G f_e^{-1} = f_v \mathsf{s}_G  $ and $ \mathsf{t}_G f_e^{-1} = f_v \mathsf{t}_G $. Now, let us consider the edge $(a,b)$ in $G[N_1(x)]$. Since $f$ is a fibration, there exists a unique edge $f_e^{-1} (a,b) = (c,d) \in E$ such that $f_e(c,d) =(a,b)$, where $d \in \fibre{b}$. Then,
\begin{align*}
  (c,d) = ( \mathsf{s}_G f_e^{-1}(a,b),  \mathsf{t}_G f_e^{-1} (a,b)   ) = (f_v(a),f_v(b)) =  ( \mathsf{s}_G f_e^{-1}(a,x),  \mathsf{t}_G f_e^{-1} (b,x)   )  \eqstop
\end{align*}
Therefore,  $g$ is indeed an isomorphism between $G[N_1(x)]$ and $G[N_1(y)]$ proving $x \overset{1}\sim y $.

Now, we prove the second part of the proposition. Let us assume $  x \overset{1}\sim y  $. In order to define a fibration $f=(f_v,f_e)$, let us first pick representatives for the equivalence classes of $ \overset{1}\sim  $. Let the injective map $\mathsf{r} :V \longrightarrow V$ define the representatives, that is, for each $x\in V$, we have $\coset{x}{1} = \coset{ \mathsf{r}(x)  }{1}  $. Then, consider the following maps
\begin{align*}
  f_v(x) \defeq \mathsf{r}(x), \forall x \in V, \text{ and } f_e(a,b)= ( g (a), g(b)  ) \eqcomma
\end{align*}
where $   g \in \Psi_1  $ is such that $ g (b) =  \mathsf{r}(b)$. Please note that the choice of $g$ depends on $(a,b)$.  The epimorphism $f$ defined above is indeed a fibration \cite{boldi2002fibrations}.
\end{proof}

\subsection*{Monotonicity fails for $P$-lifting}

It is intuitive that the monotonic decrease of KL divergence for finer partitions should carry over to lifting by the transition matrix. However, this is not the case as the following counterexample shows. Consider a transition probability matrix:
\begin{align*}
T=\begin{pmatrix}
0.10 & 0.10 & 0.07 & 0.16 & 0.13 & 0.20 & 0.04 & 0.20 \\
0.11 & 0.17 & 0.10 & 0.15 & 0.12 & 0.13 & 0.14 & 0.08 \\
0.07 & 0.13 & 0.10 & 0.14 & 0.09 & 0.02 & 0.41 & 0.04 \\
0.16 & 0.08 & 0.02 & 0.17 & 0.05 & 0.23 & 0.06 & 0.23 \\
0.07 & 0.12 & 0.20 & 0.17 & 0.22 & 0.21 & 0.01 & 0.00 \\
0.07 & 0.15 & 0.25 & 0.10 & 0.18 & 0.03 & 0.21 & 0.01 \\
0.14 & 0.07 & 0.20 & 0.14 & 0.10 & 0.10 & 0.07 & 0.18 \\
0.10 & 0.19 & 0.07 & 0.22 & 0.11 & 0.03 & 0.14 & 0.14
\end{pmatrix} \eqstop
\end{align*}
Now, consider  two partitions:$\{ \{1,2,3,4\},\{5,6,7,8\} \}$ and $\{\{1,2\},\{3,4\},\{5,6\},\{7,8\}\}$. Clearly the latter partition is a refinement of the first. However, when we use $P$-lifting, the first partition yields a KL divergence of $0.0019067$, while the second partition yields a higher KL divergence of $0.0308801$.

%

%


\section*{ACKNOWLEDGEMENTS}
This work  has been co-funded by the German Research Foundation~(DFG) as part of project C3 within the Collaborative Research Center~(CRC) 1053 -- MAKI.

\bibliographystyle{abbrv}
\bibliography{main}

\end{document}